\documentclass[onefignum,onetabnum, a4paper]{siamart190516}


\usepackage{amsmath}
\usepackage{amsfonts}
\usepackage{amssymb}


\newcommand{\R}{{\cal R}}

\newcommand{\cA}{{\mathfrak A}}
\newcommand{\cS}{{\mathfrak S}}
\newcommand{\cX}{{\mathfrak X}}

\newcommand{\cP}{{\mathfrak P}}

\newcommand{\cM}{{\mathfrak M}}

\newcommand{\cF}{{\mathfrak F}}

\newcommand{\cR}{{\mathfrak R}}
\newcommand{\cV}{{\mathfrak V}}

\def\argmin{\mathop{\rm argmin}}

\newcommand{\Q}{{\cal Q}}

\newcommand{\V}{{\cal V}}
\newcommand{\Z}{{\cal Z}}
\newcommand{\F}{{\cal F}}

\newcommand{\A}{{\cal A}}
\newcommand{\B}{{\cal B}}
\newcommand{\G}{{\cal G}}

\newcommand{\J}{{\cal J}}
\newcommand{\M}{{\cal M}}
\newcommand{\X}{{\cal X}}

\newcommand{\LL}{{L}}

\newcommand{\be}{\begin{equation}}
\newcommand{\ee}{\end{equation}}

\newcommand{\lan}{\langle}
\newcommand{\ran}{\rangle}

\def\w{\omega}
\def\e{\varepsilon}
\def\O{\Omega}

\def\supp {{\rm supp}}



\makeatletter
\newif\ifrev@inenv		
\ifdefined\AddToHook
	\AddToHook{env/rev/begin}{\rev@inenvtrue}
\else
	\usepackage{etoolbox}
	\AtBeginEnvironment{rev}{\rev@inenvtrue}
\fi
\newcommand\rev[1][black]{
	\ifrev@inenv
		\rev@inenvfalse
		\expandafter\color
	\else\expandafter\textcolor
	\fi
	{#1}%
}
\makeatother

\DeclareMathOperator*{\ess}{ess\,sup}
\DeclareMathOperator*{\essinf}{ess\,inf}
\DeclareMathOperator*{\esssup}{ess\,sup}

\DeclareMathOperator{\avr}{{\sf AV@R}}
\DeclareMathOperator{\AVaR}{{\sf AV@R}}

\def\nd {{\sf nd}}

\def\bbr{{\mathbb R}} 
\DeclareMathOperator{\E}{\mathbb E}		
\def\bbe{\E} 
\def\bbn{{\mathbb{N}}}

\def\bB{{\mathbb{B}}}
\def\eR{\overline{\mathbb{R}}}

\def\ebr{\overline{\mathbb{R}}}
\newcommand{\ind}{{\mbox{\boldmath$1$}}}

\newtheorem{example}	[theorem]{Example}


\usepackage{lipsum}
\usepackage{amsfonts}
\usepackage{graphicx}
\usepackage{epstopdf}
\usepackage{algorithmic}
\ifpdf
  \DeclareGraphicsExtensions{.eps,.pdf,.png,.jpg}
\else
  \DeclareGraphicsExtensions{.eps}
\fi


\newsiamremark{remark}{Remark}
\newsiamremark{hypothesis}{Hypothesis}
\crefname{hypothesis}{Hypothesis}{Hypotheses}
\newsiamthm{claim}{Claim}

\headers{Distributionally Robust Optimization}{A. Pichler, and  A. Shapiro}

\title{Mathematical Foundations of Distributionally Robust  Multistage Optimization\thanks{Submitted to the editors DATE.
}}

\author{Alois Pichler\thanks{Technische Universit\"{a}t Chemnitz, Fakult\"{a}t für Mathematik, D--09111 Chemnitz, Germany. Funded by Deutsche Forschungsgemeinschaft (DFG, German Research Foundation)~-- Project-ID 416228727 -- SFB~1410, \email{alois.pichler@math.tu-chemnitz.de}.}
\and Alexander Shapiro\thanks{School of Industrial and Systems Engineering, Georgia Institute of Technology, Atlanta, Georgia 30332-0205, \email{ashapiro@isye.gatech.edu}.}
}

\usepackage{amsopn}


\ifpdf
\hypersetup{
  pdftitle={Mathematical Foundations of Distributionally Robust  Multistage Optimization},
  pdfauthor={Alois Pichler and Alexander Shapiro}
}
\fi




\begin{document}

\maketitle

\begin{abstract}
	Distributionally robust optimization involves various probability measures in its problem formulation. They can be bundled to constitute a risk functional. For this equivalence, risk functionals constitute a fundamental building block in distributionally robust stochastic programming.
	Multistage programming requires conditional versions of risk functionals to re-assess future risk after partial realizations and after preceding decisions.

	This paper discusses a  construction of the conditional counterpart of a risk functional  by passing its genuine characteristics to its conditional counterparts.  The conditional risk functionals  turn out to be  different  from the nested analogues  of the  original (law invariant) risk measure. It is demonstrated that the initial measure and its nested decomposition can be used in a distributionally robust  multistage  setting.
\end{abstract}

\begin{keywords}
Multistage stochastic programming,  distributional robustness, conditional risk measures, dynamic equations, stochastic games, rectangularity
\end{keywords}

\begin{AMS}
90C15, 60B05, 62P05, 90C31, 90C08
\end{AMS}

\section{Introduction}
	This paper addresses  \emph{Distributionally Robust Optimization} (DRO) when applied to multistage stochastic programming problems. This robust approach involves various probability measures and discloses an optimal solution which is robust with respect to varying distributions.
	DRO involves the functional
	\begin{equation}\label{fun-1}
	 	\R(Z)  :=   \sup_{Q\in \cM}\bbe_Q[Z]
	\end{equation}
	for a chosen so-called \emph{ambiguity} set $\cM$ of probability measures.
	Various constructions of the ambiguity  set  were suggested  in the applications. \rev{It is beyond the scope of this paper to give a survey of this literature, we can mention for example \cite{Kuhn2015, Wiesemann2014, KuhnWiesemannFoundations}  and references therein.} By duality, DRO   is closely related to the \emph{risk averse} approach where~$\R$ is viewed as a risk measure \rev{(cf.\ \cite{ADEH:1999, Follmer2004, RuszczynskiShapiro2009}).}
	In case the ambiguity set $\cM=\{P\}$ is a singleton this reduces to the risk neutral approach. The essential difference between the DRO and the risk averse approaches  is how the corresponding functional~$\R$ is defined.

	In the static setting the equation~\eqref{fun-1} of the objective   functional~$\R$  is reasonably straightforward.  Multistage problems with  decisions made dynamically, on the other hand,  are considerably more involved.
	\begin{rev}In the risk averse approach the corresponding risk measure, associated with the sequential decision process, usually  is formulated in the nested (composite) form (cf.\ \cite{FriRos:2005, Riedel2004, Ruszczynski}). Such nested formulations are   amendable to Bellman's principle of optimality and suitable for  writing  the respective dynamic programming equations. This  in turn is directly related to the concept of time consistency meaning  that a decision maker should not reconsider optimality of the   decisions at the later  stages of the process. In that respect nested formulations provide a natural framework for  the optimality criteria at every stage of the   process.

In the DRO  setting the situation is  more delicate.
 It turns out that seemingly natural extensions of the DRO and the risk averse approaches could  lead to somewhat different frameworks. The basic question of formulating  a conditional counterpart of the DRO functional~\eqref{fun-1} turns out to be not straightforward. The main goal  of this paper is to elaborate on the involved  fundamental  questions    in a mathematical context.
\end{rev}

\paragraph{Outline}
 The following section elaborates the mathematical setting for distributionally robust functionals and  risk measures. Section~\ref{sec-condcounter} addresses their  conditional counterparts. Nested formulations are important in multistage stochastic optimization, they are considered in Section~\ref{sec-nested}. The corresponding  dynamic programming  equations are elaborated in Section~\ref{sec:MSO} before we   conclude in Section~\ref{sec:Summary}.

\section{Preliminaries and the static setting}\label{sec:basic}

Let $(\O,\F)$ be a measurable space and $\cM$ be a nonempty set of probability measures (distributions) on $(\O,\F)$. We denote by $\cP$ the set of all probability distributions on $(\O,\F)$, thus  $\cM\subset \cP$.
The corresponding functional
\begin{equation}\label{fun-2}
	\R(Z)  :=   \sup_{Q\in \cM}\bbe_Q[Z], \quad Z\in \Z,
\end{equation}
is defined on a linear space $\Z$ of measurable functions   $Z\colon\O\to \bbr$, where we use the notation $\bbe_Q[Z]$ to emphasize that the expectation
is with respect to the probability measure~$Q\in \cP$.
 We refer to $\cM$ as the \emph{ambiguity set} of probability measures and to $\R$ as the \emph{distributionally robust} functional.

We assume that $\Z$ is a linear (vector) space of measurable functions such that $\ind_\O\in \Z$, where $\ind_A$ is the indicator  function of the set $A$, i.e., $\ind_A (x)=1$ for $x\in A$ and $\ind_A (x)=0$ otherwise.
This implies that $Z+a\in \Z$, if $Z\in \Z$ and $a\in \bbr$.
We associate with $\Z$ a linear space $\Z^*$ of finite signed measures
 on $(\O, \F)$ such that $\Z$ and $\Z^*$ are paired
 vector spaces with respect to the bilinear form
\begin{equation}\label{bilform-1}
	\lan Z,Q\ran:=   \int_\O Z(\w)\,Q(d\w),\quad Z\in \Z,\  Q\in \Z^*.
\end{equation}
\rev{The integral in~\eqref{bilform-1} is supposed to be well-defined and finite valued for all $Z\in \Z$ and $Q\in \Z^*$. }
We assume that $\cM\subset \Z^*$ and hence, for any $Z\in \Z$ and $Q\in \cM$, the expectation   $\bbe_Q[Z]= \lan Z,Q\ran$ is well-defined and finite valued.

Stochastic optimization   problems are often formulated by involving continuous probability measures, but computations need to be implemented with discrete measures. It is thus essential to elaborate on both, discrete and continuous probability measures. For this reason we consider and address the following settings separately before combining them in the general framework.

\subsection{Random variables with finite moments}
\label{ex-lp}

	Consider the
 space $\Z:=L_p(\O,\F,P)$, $p\in [1,\infty)$,  of random variables $Z:\O\to\bbr$ with finite $p$-th order moments.
 We refer to~$P$ as the {\em reference} probability measure.
The space  $\Z$, equipped with the corresponding norm $\|\cdot\|_p$,  is a Banach space. Its dual space (of continuous linear functionals) is $L_q(\O,\F,P)$  such that $1/p+1/q=1$, $q\in (1,\infty]$. Suppose  that  the set $\cM$ consists of probability measures $Q$ absolutely continuous with respect to~$P$  such that their density $\zeta=dQ/dP$ belongs to the space $L_q(\O,\F,P)$.

	Specifically, we pair  $\Z=L_p(\O,\F,P)$ with its dual $\Z^*= L_q(\O,\F,P)$  with the respective bilinear form
	\begin{equation}\label{bilform-2}
		\lan Z,\zeta\ran:=   \int_\O Z(\w)\zeta (\w)\,P(d\w),\quad Z\in \Z,\ \zeta\in \Z^*.
	\end{equation}
	Note that here $Z\in \Z$ is actually a class of measurable variables which can be different from each other on sets of $P$-measure zero. We have that the respective  functional $\R(\cdot)$ is finite valued on $\Z$ iff the set
\begin{equation}\label{dual-r1}
  \cA:= \{\zeta=dQ/dP\colon Q\in \cM\}
\end{equation}
 is bounded in the corresponding norm $\|\cdot\|_q$. The set $\cA$ can be assumed to be convex and closed in the weak$^*$ topology of the dual space $\Z^*$. We refer to $\cA$ as the dual set of the risk functional $\R$.

\subsection{Continuous random variables}\label{ex-comeg}
Suppose that 	the set $\O$ is a compact metric  space and $\F$ is its  Borel $\sigma$\nobreakdash-algebra. Let $\Z:= C(\O)$ be the space of continuous functions $Z\colon\O\to\bbr$ equipped with the supremum norm $\|Z\|_\infty=\sup_{\w\in \O}|Z(\w)|$. The dual space of $C(\O)$ is formed by finite signed measures on $(\O,\F)$ with respect to the bilinear form~\eqref{bilform-1}  (Riesz representation).
	It could be noted that the dual norm $\|\cdot\|^*$  of $\Z^*=C(\O)^*$ is the total variation norm and $\|Q\|^*=1$ for any probability measure $Q\in \cP$. Also, for any $Q\in \cP$ and $Z$, $Z'\in \Z$ we have that
	\[	\bbe_Q[Z']=\bbe_Q[Z+Z'-Z]\le \bbe_Q[Z]+a,	\]
	where $a:= \sup_{\w\in \O} |Z^\prime(\w)-Z(\w)|=\|Z^\prime-Z\|_\infty$.
	It follows that $\R(\cdot)$, as defined in~\eqref{fun-2}, is finite valued and
	\begin{equation}\label{lipsch}
		|\R(Z')-\R(Z)|\le \|Z'-Z\|_\infty,
	\end{equation}
	i.e., $\R$ is Lipschitz continuous with Lipschitz constant~$1$.

\subsection{Bounded functions}\label{ex-comeg1}
Consider the   space $\Z:= \bB(\O)$ consisting  of measurable and bounded functions $Z\colon\O\to\bbr$. Equipped with the supremum norm $\|\cdot\|_\infty$ this becomes a Banach space. Its dual (of continuous linear functionals) is quite complicated.
Nevertheless, we can pair it with the space $\Z^*$ of finite signed measures on $(\O,\F)$ employing the bilinear form~\eqref{bilform-1}.
As in the previous example, it can be shown that the inequality~\eqref{lipsch} holds for any $Z$, $Z'\in \bB(\O)$, and hence the corresponding distributionally robust functional~\eqref{fun-2} is finite valued and continuous in the norm $\|\cdot\|_\infty$ topology. However, we will have to deal here with the paired topologies of the spaces $\Z$ and $\Z^*$.

\medskip
There is a natural order relation making the space $\Z$ from any of the three subsections above a partially ordered set. For $Z$,~$Z'\in \Z$ we write $Z'\succeq Z$ (or $Z\preceq Z'$) if $Z'\ge Z$ almost surely with respect to the reference measure~$P$ in the $L_p$ setting, and $Z'(\w)\ge Z(\w)$ for all $\w\in \O$ in the $C(\O)$ and $\bB(\O)$ settings.

\subsection{Correspondence of robust functionals with risk functionals}

It is straightforward to verify that the distributionally robust functional~\eqref{fun-2} satisfies the following conditions for any $Z$, $Z'\in \Z$:
\begin{enumerate}
	\item \label{enu:A1} \emph{subadditivity:}
	$\R(Z + Z') \le \R(Z) +\R(Z')$,
	\item \label{enu:A2} \emph{monotonicity:} if $Z' \succeq Z$,
	then $\R(Z')\geq \R(Z)$,
	\item \label{enu:A3} \emph{translation equivariance:} if $a\in \bbr$, then $\R(Z+a)=\R(Z)+a$,
	\item \label{enu:A4} \emph{positive homogeneity:} if $\lambda >0$, then
	$\R(\lambda\, Z)=\lambda\,\R(Z)$.
\end{enumerate}

Conversely, in the $L_p$ and $C(\O)$ frameworks,
any \emph{real valued} functional $\R\colon\Z\to \bbr$ satisfying the axioms~\ref{enu:A1}--\ref{enu:A4} is continuous and can be represented in the dual form~\eqref{fun-1}
for an appropriate set $\cM$ of probability measures  (cf.\ \cite[Proposition~3.1]{Ruszczynski2006}).
For the $\bB(\O)$ setting this is more involved, in order to guarantee the dual representation it should be also verified that~$\R$ is lower semi-continuous with respect to the considered paired topologies.

\medskip
In the risk averse perspective the most important example of the functional~$\R$ is the Average Value-at-Risk
\begin{equation}\label{avr-1}
	\avr_\alpha (Z)=
	\inf_{\tau\in \bbr}\left\{\tau+(1-\alpha)^{-1}\bbe_P[Z-\tau]_+\right\}, \;\alpha\in [0,1),
\end{equation}
with  $\Z=L_1(\O,\F,P)$. \rev{In some publications it is also called Conditional Value-at-Risk, Expected Shortfall, Expected Tail Loss; variational representation~\eqref{avr-1} is due to Rockafellar and  Uryasev
\cite{RockafellarUryasev2000}.}
In the dual form, it   has the representation~\eqref{fun-2} with the corresponding dual set
\begin{equation}\label{avr-2}
	\cA=\left \{\zeta\colon 0\le \zeta\le 1/(1-\alpha),\text{ and } \int_\O \zeta\,dP=1\right\},
\end{equation}
which constitute densities with respect to the reference measure $P$.
For  $\alpha=1$, $\AVaR_1(Z)=\esssup(Z)$ is the essential supremum.

\section{Conditional counterparts of distributionally robust functionals}
\label{sec-condcounter}

	To set up the distributionally robust optimization in a multistage framework we need to define a conditional counterpart of the robust functional~$\R$
	defined in~\eqref{fun-2}. To this end let $\G$ be a   $\sigma$\nobreakdash-subalgebra of~$\F$. Then for any $Q\in \cM$ we can consider the corresponding conditional expectation $\bbe_{Q|\G}[Z]$ of a random variable $Z\in \Z$
	(we can refer  to  \cite{Kallenberg2002Foundations}, e.g.,  for a rigorous discussion of the  precise meaning of conditional expectations).

	\subsection{Conditional discrepancy of risk functionals}\label{ex-avr} 
	A general caveat arises with a rigorous  definition of   conditional functionals, i.e., with functionals conditioned on    $\sigma$-subalgebras.
	To elaborate the disparity we consider a conditional counterpart of the risk functional~\eqref{fun-2} first and a variant next, which is most common in the literature. Both approaches seem natural in \rev{convenient} contexts, but they differ essentially.

	\subsubsection{The conditional analogue}
	Consider  the conditional counterpart
	\begin{equation}\label{eq:32}
		\R_{|\G}(Z)=\esssup_{Q\in \cM} \E_{Q|\G}[Z]
	\end{equation}
	of~\eqref{fun-2}. A rigorous meaning of~\eqref{eq:32} is given in Section~\ref{sec-condrisk} below; however, the inconsistency already occurs in a finite setting in which the essential supremum in~\eqref{eq:32} is a usual maximum.

\begin{definition}\label{def:partition}
	We say that a family $\{A_i\}_{i\in I}$ of nonempty  sets $A_i\in \F$ is a \emph{partition} of a set $A\in \F$  if  $\cup_{i\in I} A_i=A$ and $A_i\cap A_j=\emptyset$ for $i\ne j$. It is said  that the partition is countable (finite) if the index set $I$ is countable (finite).
\end{definition}

\begin{example}
\label{ex-finite}
{\rm
	Suppose the set $\O=\{\w_1,\dots,\w_n\}$ is finite, equipped with the $\sigma$\nobreakdash-algebra~$\F$ of all its subsets and the reference probability  measure~$P$ assigns
	probabilities $p_i>0$ to each $\w_i\in \O$, $i\in \{1,\dots,n\}$. Then any probability measure $Q$ on $(\O,\F)$  is absolutely continuous with respect to~$P$,  and with a subalgebra $\G$ of $\F$  is associated  a finite partition $\{\Upsilon_i\}_{i\in I}$ of~$\O$ such that a variable $Z\colon\O\to \bbr$ is $\G$-measurable iff  it is constant on every $\Upsilon_i$,  $i\in I$.

	Now suppose that the ambiguity set $\cM$ has the following property:
\begin{equation}
	\begin{array}{lll}
		&&  \text{For every $i\in I$ and every  $\bar{\w}\in\Upsilon_i$ there is a measure $Q\in\cM$ such that:} \\
		&&\text{  $Q(\{\bar{\w}\})>0$ and
					$Q(\{\w\})=0$ for all $\w \in\Upsilon_i\setminus\{\bar{\w}\}$.}
		\label{property}
	\end{array}
\end{equation}
For such a measure~$Q$   the conditional expectation~$\bbe_{Q|\G}[Z](\cdot)$ is constant  on the set~$\Upsilon_i$ with the corresponding value
\begin{equation}
\E_Q[Z\, \ind_{\Upsilon_i}]/ Q(\Upsilon_i)=Z(\bar{\w}).
\end{equation}
	It follows with~\eqref{eq:32} that  the corresponding conditional risk functional is
	\begin{equation}\label{cond-av}
		\R_{|\G}(Z)(\w)=\max_{\w'\in \Upsilon_i} Z(\w'), \quad \w\in \Upsilon_i.
	\end{equation}
	That is, $ \R_{|\G}(Z)$ is the conditional supremum of $Z$, conditioned on the atoms~$\Upsilon_i$ of the $\sigma$\nobreakdash-algebra~$\G$. Note that  the conditional risk functional~\eqref{cond-av} is  independent of the reference measure~$P$, provided that $P(\{\w\})>0$ for all $\w\in\O$.
	
	The property~\eqref{property}  above is not uncommon.
Consider, for example, the   functional $\R:=\avr_\alpha$, $\alpha\in (0,1)$. Its dual representation~\eqref{avr-2}, of the  ambiguity (dual) set~$\cA$,
has the property~\eqref{property}  if $P(\Upsilon_i)\le \alpha$ for all $i\in I$,  i.e., if the atoms of~$\G$ are small relative to~$\alpha$.
\rev{Indeed, the   set $\cA$ consists of densities~$\zeta$ such that $0\le \zeta(\w)\le (1-\alpha)^{-1}$ for all $\w\in \O$,  and $\sum_{\w\in \O}\zeta(\w) P(\{\w\})=1$. For $i\in I$ consider  $\Upsilon_i$ and a point  $\bar{\w}\in \Upsilon_i$.   Choose $\zeta\in \cA$ such that
$\zeta(\bar{\w})=1$, and  $\zeta(\w)=0$ for $\w\in \Upsilon_i\setminus\{\bar{\w})$  and $\zeta(\w):= \kappa$
for $\w\in\O\setminus\Upsilon_i$ with $\kappa:=\frac{1-{P(\{\bar{\w}}\})}{1-P(\Upsilon_i)}$. Note that $0\le \kappa\le (1-\alpha)^{-1}$ since $P(\Upsilon_i)\le \alpha$.
 The probability measure corresponding to this density is  the required measure $Q$.
 }
 }
\end{example}

\subsubsection{The law invariant analogue}\label{disc}
The   definition~\eqref{avr-1} of the   $\avr_\alpha$   leads to the following  definition of its conditional counterpart  (e.g., \cite{Ruszczynski})
\begin{equation}\label{avr-cond1}
		\avr_{\alpha|\G} (Z)=
		\essinf_{Y\in L_1(\O,\G,P)}\left\{Y+(1-\alpha)^{-1}\bbe_{P|\G}[Z-Y]_+\right\}.
	\end{equation}
\rev{The above Example~\ref{ex-finite} demonstrates the difference between the nested $\avr$, defined in~\eqref{avr-cond1},  and the distributionally robust counterpart of $\avr$ obtained from the dual representation~\eqref{avr-2}.}
	In contrast to formula~\eqref{cond-av}, the nested  $\avr_{\alpha|\G} (Z)$  takes the value given by the respective Average Value-at-Risk on every set~$\Upsilon_i$. \rev{The  approach~\eqref{eq:32}  seems to be natural from the distributional robustness point of view while it is different from the nested approach to (law invariant) risk measures. We are going to discuss this in the next section.}

\subsection{The ``right"  conditional analogue}
	In the risk averse approach it is natural to consider law  invariant risk measures such that the value $\R(Z)$ depends only  on the distribution   (with respect to the reference probability measure) of $Z$. Then intuitively the  nested conditional counterpart   is defined in the same way as a function of the conditional distribution of~$P$. The definition~\eqref{avr-cond1} of the  nested Average Value-at-Risk  is of that form.
	
	On the other hand, suppose for example  that the ambiguity  set $\cM$ is given by convex combinations of a finite   set $\{Q_1,\dots,Q_m\}$ of probability measures. Then the definition via~\eqref{eq:32} makes sense. Also in some cases the functional $\R$ is not law invariant, and consequently its law invariant conditional counterpart cannot be defined.
	So the question of what is a \emph{right} definition of the conditional counterpart of the distributionally robust functional is open for discussion.
The nested approach  was discussed extensively in the   literature on risk averse  stochastic optimization \rev{(e.g., \cite{Riedel2004, Ruszczynski}).} On the other hand  the conditional counterpart~\eqref{eq:32} appears naturally as an extension of the definition~\eqref{fun-2} of the distributionally robust functional \rev{(e.g., \cite{Shapiro2015}).}   In Section~\ref{sec-rec} we will discuss the \emph{rectangular} setting where both approaches are equivalent.
Unless stated otherwise we deal in the remainder 
	with the conditional distributed robust  functional $\R_{|\G}$  stated in~\eqref{eq:32} above.

	To address the mathematical foundation we need to give a rigorous meaning to the setting~\eqref{eq:32} for uncountable measurable spaces.
It is tempting to define the conditional counterpart of $\R(Z)$ as the pointwise supremum,
	\begin{equation}\label{muldisr-1}
		\R_{|\G}(Z)(\w):= \sup_{Q\in \cM} \bbe_{Q|\G}[Z](\w),\quad\w\in \O.
	\end{equation}
	However, there are several technical problems with a rigorous meaning of the right-hand side of~\eqref{muldisr-1}.  Actually  $\bbe_{Q|\G}[Z]$  is a class of $\G$-measurable functions such that its versions can be different from each other on a set of $Q$-measure zero. So it is not clear what the supremum of such functions over $Q\in\cM$ really means; even if properly defined, its measurability is not obvious when the set $\cM$ is uncountable.
We will need the  concepts of essential supremum and infimum, discussed in the next section.


\subsection{The essential supremum}\label{sec:essent}
Unless stated differently 
we assume in this section  that there is a reference probability measure~$P$ on $(\O,\F)$ and that every $Q\in \cM$  is absolutely continuous with respect to~$P$. We use the notation $Q\ll P$ to denote that~$Q$  is absolutely continuous with respect to~$P$. We employ  the concept of essential supremum.

Let us start with  some basic definitions of partially ordered sets. A binary relation~$\preceq$ on a set~$\cS$ is said to be a \emph{partial order} on $\cS$
if for any $x$, $x'$, $x''\in \cS$ the following properties hold:
(i) $x\preceq x$,
(ii) if $x\preceq x'$ and $x'\preceq x$, then $x=x'$,
(iii)  if $x\preceq x'$ and $x'\preceq x''$, then $x\preceq x''$.
It said that an element $y\in \cS$ is an \emph{upper bound} for a (nonempty) set $\cV\subset \cS$ if $x\preceq y$ for any $x\in \cV$. If an upper bound $y$ of ~$\cV$ belongs to $\cV$, then $y$ is said to be the \emph{largest element} of $\cV$. The lower bound and the smallest element of $\cV$ are defined in the analogous way.
Note that by the property (ii)  of the partial order, it follows that if the largest (smallest) element exists, then it is unique. The least upper bound of $\cV$, which is the smallest element of the set of the upper bounds of $\cV$, is
called the supremum of $\cV$ and denoted $\sup \cV$. That is, $\sup \cV$ is an element of $\cS$ such that $x\preceq \sup \cV$ for any $x\in \cV$, and if $y$ is an upper bound of $\cV$, then $\sup \cV \preceq y$. If the supremum $\sup \cV$ exists, then it is unique.

Clearly if the set $\cV$ has the largest element $\bar{x}$, then $\bar{x}=\sup\cV$.
With the set $\cV$ we associate its order complement
\begin{equation}\label{complem}
	\cV^c:= \{y\in \cS\colon x\preceq y \text{ for all } x\in \cV\}.
\end{equation}
It is straightforward to verify the following well known result.

\begin{lemma}\label{lem-sup}
	The supremum $\sup \cV$ exists iff its order complement set $\cV^c$ has the smallest element $\bar{y}$, in which case $\bar{y}=\sup\cV$.
\end{lemma}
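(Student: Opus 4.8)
The plan is to prove both implications of the biconditional directly from the definitions of supremum, upper bound, smallest element, and order complement, using only the three axioms of a partial order and Lemma~\ref{lem-sup}'s hypotheses — in fact this statement \emph{is} essentially just unfolding definitions, so the ``main obstacle'' is purely bookkeeping: making sure the uniqueness claims are invoked correctly and that no hidden assumption (such as existence of $\sup\cV$) is smuggled in.

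\medskip\noindent\emph{First implication ($\Rightarrow$).} Suppose $\sup\cV$ exists; call it $\bar y$. By the very definition of supremum, $\bar y$ is an upper bound of $\cV$, i.e.\ $x\preceq\bar y$ for all $x\in\cV$, which by~\eqref{complem} says precisely $\bar y\in\cV^c$. Now take any $y\in\cV^c$. Then $x\preceq y$ for all $x\in\cV$, so $y$ is an upper bound of $\cV$; since $\bar y=\sup\cV$ is the \emph{least} upper bound, $\bar y\preceq y$. As $y\in\cV^c$ was arbitrary, $\bar y$ is a lower bound of $\cV^c$ lying in $\cV^c$, hence the smallest element of $\cV^c$. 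This already gives $\bar y=\sup\cV$ as the smallest element, which is the ``in which case'' assertion.

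\medskip\noindent\emph{Second implication ($\Leftarrow$).} Conversely suppose $\cV^c$ has a smallest element $\bar y$. I claim $\bar y=\sup\cV$. Since $\bar y\in\cV^c$, by~\eqref{complem} we have $x\preceq\bar y$ for all $x\in\cV$, so $\bar y$ is an upper bound of $\cV$. Let $y$ be any upper bound of $\cV$; then $x\preceq y$ for all $x\in\cV$, i.e.\ $y\in\cV^c$, and since $\bar y$ is the smallest element of $\cV^c$ we get $\bar y\preceq y$. Thus $\bar y$ is an upper bound of $\cV$ that is $\preceq$ every upper bound of $\cV$, which is exactly the definition of $\sup\cV$; in particular $\sup\cV$ exists and equals $\bar y$.

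\medskip\noindent The only subtlety worth flagging is the \emph{uniqueness} of the smallest element and of the supremum, both of which follow from antisymmetry (property (ii) of the partial order) and are already noted in the text; this is what makes the phrase ``in which case $\bar y=\sup\cV$'' unambiguous, since the $\bar y$ produced in the two directions must coincide. No compactness, measurability, or completeness is needed — the lemma is a formal fact about posets, and the proof is a short two-paragraph definition-chase as above.
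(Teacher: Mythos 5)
Your proof is correct and is exactly the routine definition-chase the paper has in mind: the paper gives no proof at all, merely remarking that the lemma is ``straightforward to verify,'' and your two implications supply precisely the omitted details. Both directions and the uniqueness remark are handled properly, so there is nothing to add.
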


For random variables $X$, $Y$ we can define the partial order  $X\preceq Y$
meaning that $X\le Y$  almost surely with respect to the reference measure $P$.
In the considered setting  the supremum of a set $\cX$ of measurable functions is called the essential supremum of $\cX$. Denote by $\eR:= \mathbb R\cup\{\pm\infty\}$ the extended real line.

\begin{definition}\label{def-essup}
	The \emph{essential supremum} of a (nonempty) set $\cX$ of measurable functions $X\colon\O\to \ebr$, denoted $\esssup \cX$, is a measurable function
	$X^*$ satisfying the following properties: {\rm (i)}
 for any $X\in\cX$ it follows that $X\preceq X^*$,
{\rm (ii)} if $Y$ is a measurable function such that $X\preceq Y$ for all $X\in \cX$, then $X^*\preceq Y$.
\end{definition}

The \emph{essential infimum} is defined in the analogous way and is denoted $\essinf \cX$, that is $\essinf \cX:=-\esssup(-\cX).$  Let us emphasize that these concepts of the essential  supremum and infimum are defined with respect to the reference measure~$P$.
In the setting of Definition~\ref{def-essup}  the essential supremum always exists and,
as it was pointed above,   is unique (e.g., \cite[Appendix~A.5]{Follmer2004}).


\begin{proposition}\label{pr-essexist}
Let $\cX=\{X_i\}_{i\in I}$ be a nonempty  set of measurable functions $X_i\colon\O\to \ebr$. Then
the essential supremum $X^*=\esssup\cX$  exists and, moreover, there is a countable set
$J\subset I$ such that $P$\nobreakdash-almost surely $X^*= \sup_{i\in J}X_i$.
\end{proposition}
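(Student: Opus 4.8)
The strategy is the classical one for existence of essential suprema: reduce the uncountable family to a countable subfamily by a ``maximizing sequence'' argument. First I would handle a harmless technical reduction. Composing with a fixed strictly increasing bounded homeomorphism $\phi\colon\ebr\to[0,1]$ (for instance $\phi(t)=\tfrac12+\tfrac1\pi\arctan t$, with $\phi(\pm\infty)=0,1$) we may assume without loss of generality that each $X_i$ takes values in $[0,1]$; this makes all the expectations below finite and the monotone convergence arguments clean, and $\phi$ being an order isomorphism means the essential supremum of the transformed family transports back to the one of the original family.

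Next I would consider the collection $\mathcal{D}$ of all \emph{finite} subsets $D\subset I$, and for each such $D$ form $X_D\coloneqq\max_{i\in D}X_i$, which is measurable and lies in $[0,1]$. The set of real numbers $\{\bbe_P[X_D]\colon D\in\mathcal{D}\}$ is bounded above by $1$, so it has a finite supremum $s$. Choose finite sets $D_k\in\mathcal{D}$ with $\bbe_P[X_{D_k}]\to s$, and put $J\coloneqq\bigcup_{k\in\bbn}D_k$, a countable subset of $I$. Define $X^*\coloneqq\sup_{i\in J}X_i=\lim_{m\to\infty}X_{E_m}$, where $E_m\coloneqq D_1\cup\dots\cup D_m$ is an increasing sequence of finite sets with union $J$; thus $X^*$ is the increasing pointwise limit of the measurable functions $X_{E_m}$, hence measurable, and by monotone convergence $\bbe_P[X^*]=\lim_m\bbe_P[X_{E_m}]=s$ (the limit is $s$ since $\bbe_P[X_{D_m}]\le\bbe_P[X_{E_m}]\le s$).

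It remains to verify the two defining properties of Definition~\ref{def-essup}. Property (ii) is immediate: if $Y$ is measurable with $X_i\preceq Y$ for all $i\in I$, then in particular $X_i\le Y$ a.s.\ for $i\in J$, and since $J$ is countable the exceptional null sets union to a null set, giving $X^*=\sup_{i\in J}X_i\le Y$ a.s., i.e.\ $X^*\preceq Y$. Property (i) is the only place that uses the maximality of $s$: fix an arbitrary $i\in I$ and consider $W\coloneqq\max(X^*,X_i)=\lim_m X_{E_m\cup\{i\}}$. Then $W\ge X^*$ pointwise and $\bbe_P[W]=\lim_m\bbe_P[X_{E_m\cup\{i\}}]\le s=\bbe_P[X^*]$ since each $E_m\cup\{i\}$ is a finite set; hence $\bbe_P[W-X^*]\le 0$ with $W-X^*\ge 0$, forcing $W=X^*$ a.s., i.e.\ $X_i\le X^*$ a.s. As $i\in I$ was arbitrary, $X_i\preceq X^*$ for all $i$, so $X^*=\esssup\cX$ and simultaneously $X^*=\sup_{i\in J}X_i$ for the countable set $J$, as claimed.

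\textbf{Main obstacle.} There is no deep difficulty here; the one point that needs care is that the ``$\sup$'' defining $X^*$ must be taken over a \emph{countable} index set so that measurability and the almost-sure comparisons survive — this is exactly why the reduction is organized around finite subsets and a real-valued supremum of their expectations, rather than attempting to take a supremum over all of $I$ directly. A secondary point to state carefully is the passage through the bounded transform $\phi$ (or, alternatively, truncating $X_i$ at levels $\pm N$ and taking a diagonal/countable union over $N$), which is what licenses the use of monotone convergence and of finite expectations when the $X_i$ are allowed to take the values $\pm\infty$.
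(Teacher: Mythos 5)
Your proof is correct and is exactly the standard argument (bounded monotone transform, maximizing the expectation over finite subfamilies, countable union, monotone convergence) found in the reference the paper cites for this fact, namely \citet[Appendix~A.5]{Follmer2004}; the paper itself states the proposition without supplying a proof. Nothing to object to.
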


\begin{remark}
\label{rem-refmes}
In some settings there is no  natural reference probability measure.
In such cases we can consider   the settings of $\Z=C(\O)$ or $\Z=\bB(\O)$. Let $\cX\subset \Z$ be a nonempty set of functions and $\cX^c$ be its order complement.
Consider the pointwise partial order $X\preceq Y$,
meaning that $X(\w)\le Y(\w)$ for all $\w\in \O$, and the
respective pointwise supremum and pointwise infimum functions
\[
	X^*(\w):= \sup_{X\in \cX} X(\w) \text{ and } \bar{X}(\w):= \inf_{X\in \cX^c} X(\w),\;\w\in \O.
\]
It is not difficult to see that the set $\cX$ has the largest element iff $X^*\in \Z$ (i.e., $X^*$ is continuous in the $C(\O)$ setting and measurable in the $\bB(\O)$ setting), in which case $X^*=\esssup \cX$. Also, by Lemma~\ref{lem-sup} we have that the $\esssup \cX$ exists iff $\bar{X}$ belongs to $\Z$, in which case $\bar{X}(\w)=\esssup \cX$.

For example,
	consider the $C(\O)$-setting with $\O:= [0,1]$. Let $[a,b]$ be an interval such that $0<a\le b<1$, and $\cX$ be the set of functions $X\colon[0,1]\to \bbr$ such that $0\le X(\w)\le 1$, $\w\in [0,1]$, $X(\w)=0$, $\w\in [a,b]$. Here the pointwise supremum function $X^*$ has the form $X^*(\w)=0$ for $\w\in [a,b]$, and $X^*(\w)=1$ otherwise. Since $X^*$ is not a continuous function, the set $\cX$ does not have the largest element.
	Consider now the order complement $\cX^c$ and the corresponding pointwise infimum function $\bar{X}$.
	If $a<b$, then $\bar{X}(\w)=0$ for $\w\in (a,b)$, and $\bar{X}(\w)=1$ otherwise. In that case the set $\cX^c$ does not have the smallest element and the essential supremum of the set $\cX$ does not exist. On the other hand if $a=b$, then the constant function $\bar{X}(\cdot)\equiv 1$ is the smallest element of $\cX^c$ and is the essential supremum of~$\cX$.
\end{remark}

\subsection{Conditioning risk functionals}
\label{sec-condrisk}
This section resumes the $L_p$-setting described in Section~\ref{ex-lp} above; that is, $\Z=L_p(\O,\F,P)$.
It could be noted that conditional expectations with respect to different probability measures could have arbitrary  values on sets having  positive reference probability measure.
The following remark elaborates the consequences for the conditional risk functional.

\begin{remark}\label{ex-sigmalg}
	Let $P$ be a reference probability measure on $(\O,\F)$.
	Suppose that $\G=\F$.
	For any $Q\in \cM$ and $\F$-measurable variable $Z\colon\O\to\bbr$, we have that $Q$-almost surely $\bbe_{Q|\F}[Z]=Z$.
	If $\R_{|\G}$ is viewed as a mapping from $\LL_p(\O,\F,P)$ to $\LL_p(\O,\G,P)$, then   the respective translation equivariance axiom (cf.~\ref{enu:A3}) requires  that $P$-almost surely $\R_{|\G}(Y)=Y$ for any $\G$-measurable  $Y$. When $\G=\F$ this means that $P$-almost surely  $\R_{|\G}(Z)=Z$ for any $Z\in \Z$.
	On the other hand, it could happen for some $Q\in \cM$ and $A\in\F$ that $Q(A)=0$ while $P(A)>0$, i.e., $P$ is not absolutely continuous with respect to~$Q$.
	In that case a version of $\bbe_{Q|\F}[Z]$ can have arbitrary values on the set~$A$,
  and hence is not equal~$Z$ almost surely with respect to the reference measure~$P$.
\end{remark}

 In order to provide a definition of conditional distributionally robust functio\-nals we use the concept of essential supremum applied to the
 family $\left\{\bbe_{Q|\G}[Z](\cdot)\colon Q\in \cM\right \}$ of $\G$\nobreakdash-measurable functions.
 For that  we need to choose  a specific  version of $\bbe_{Q|\G}[Z]$ for a given $Z\in \Z$.  If~$P$ is absolutely continuous with respect to \emph{every} $Q\in \cM$,  then we can use any version of $\bbe_{Q|\G}[Z]$.  However, there are natural  examples where this does not hold for some $Q\in \cM$. In order to deal with this we consider the essential infimum (taken with respect to~$P$)  of the set of   versions of  $\bbe_{Q|\G}[Z]$, denoted
 $\essinf  \bbe_{Q|\G}[Z]$.
\begin{definition}\label{def:essinf}
	For a probability  measure $Q$ and a random variable~$Z$ we define
\[\essinf \E_{Q|\mathcal G}[Z]\] (taken with respect to $P$) as the essential infimum of the set of
versions of  $\bbe_{Q|\G}[Z]$.
\end{definition}

The $\essinf  \bbe_{Q|\G}[Z]$ takes the value $-\infty$ on any  set $A\in \G$ such that $Q(A)=0$ while $P(A)>0$.
If~$P$ is absolutely continuous with respect to $Q$, then
$\essinf  \bbe_{Q|\G}[Z]= \bbe_{Q|\G}[Z]$.
The essential infimum $\essinf  \bbe_{Q|\G}[Z]$  exists  for every $Q\in \cM$. However, for $Z\in L_p(\O,\F,P)$, the corresponding  $\essinf  \bbe_{Q|\G}[Z]$ belongs to $L_p(\O,\F,P)$ only if it takes the value $-\infty$ on a set of $P$-measure zero, that is only if $P\ll Q$.

The essential supremum of the family $\left\{\essinf \bbe_{Q|\G}[Z], \;Q\in \cM\right \}$ leads to the following definition of the conditional functional  $\R_{|\G}$. As before, $X\preceq Y$
means that $X\le Y$  almost surely with respect to the reference measure~$P$.

\begin{definition}[Conditional distributionally robust functional]\label{def-sup-1}
Let $\G$ be a   $\sigma$-subalgebra of~$\F$. A version of the conditional distributionally robust functional is defined as  a $\G$-measurable variable $\R_{|\G}(Z)\colon\O\to\eR$
satisfying the following properties:
\begin{enumerate}\item [{\rm (i)}] for  every $Q\in \cM$  the inequality    $\essinf \bbe_{Q|\G}[Z]\preceq \R_{|\G}(Z)$  holds,
  \item [{\rm (ii)}] if $Y\colon\O\to\eR$ is a $\G$-measurable variable such that  $\essinf \bbe_{Q|\G}[Z]\preceq Y$ holds   for every $Q\in \cM$, then  $\R_{|\G}(Z)\preceq Y$.
\end{enumerate}
We  use  the notation
	\begin{equation}\label{condfun}
		\R_{|\G}(Z)=\ess_{Q\in \cM} \bbe_{Q|\G}[Z]
	\end{equation}	for the \emph{conditional distributionally robust}  functional.
\end{definition}
Note that if $\G=\{\emptyset,\O\}$ is trivial, then  $\R_{|\G}$ coincides with the initial distributionally robust functional~$\R$ in~\eqref{fun-2}.

\begin{proposition}\label{cdrineq}
	For all $Z\in\Z$ it holds that \[\R(Z) \le \R\big( \R_{|\G}(Z)\big).\]
\end{proposition}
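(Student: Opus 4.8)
The plan is to compare $\R(Z)$ and $\R(\R_{|\G}(Z))$ by moving through the defining variational characterizations of both objects. First I would fix an arbitrary $Q\in\cM$ and compare $\bbe_Q[Z]$ with $\bbe_Q\big[\essinf\bbe_{Q|\G}[Z]\big]$. By the tower property, $\bbe_Q\big[\bbe_{Q|\G}[Z]\big]=\bbe_Q[Z]$, and since $\essinf\bbe_{Q|\G}[Z]$ agrees $Q$-almost surely with $\bbe_{Q|\G}[Z]$ (they can differ only on a set of $Q$-measure zero, by Definition~\ref{def:essinf}), we get $\bbe_Q\big[\essinf\bbe_{Q|\G}[Z]\big]=\bbe_Q[Z]$, provided the integral is well-defined; the only subtlety is that $\essinf\bbe_{Q|\G}[Z]$ may take the value $-\infty$ on a $P$-null (hence $Q$-null) set, which does not affect the $Q$-integral.

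Next I would use property~(i) of Definition~\ref{def-sup-1}: for this same $Q$ we have $\essinf\bbe_{Q|\G}[Z]\preceq\R_{|\G}(Z)$, i.e. the inequality holds $P$-almost surely. Since $Q\ll P$ (we are in the standing setting of Section~\ref{sec-condrisk}, where every $Q\in\cM$ is absolutely continuous with respect to $P$), the inequality also holds $Q$-almost surely, so monotonicity of the $Q$-expectation gives
\[
\bbe_Q[Z]=\bbe_Q\big[\essinf\bbe_{Q|\G}[Z]\big]\le\bbe_Q\big[\R_{|\G}(Z)\big].
\]
Now I would take the supremum over $Q\in\cM$ on both sides. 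The left-hand side is exactly $\R(Z)$ by~\eqref{fun-2}. For the right-hand side I need $\R_{|\G}(Z)\in\Z$ so that $\R(\R_{|\G}(Z))=\sup_{Q\in\cM}\bbe_Q[\R_{|\G}(Z)]$ is meaningful; under the working hypothesis that $P\ll Q$ for all $Q\in\cM$ (equivalently, that each $\essinf\bbe_{Q|\G}[Z]$ is finite $P$-a.s.), $\R_{|\G}(Z)$ is a genuine element of $L_p(\O,\G,P)\subset\Z$, and then $\sup_{Q\in\cM}\bbe_Q\big[\R_{|\G}(Z)\big]=\R\big(\R_{|\G}(Z)\big)$, yielding $\R(Z)\le\R\big(\R_{|\G}(Z)\big)$.

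The main obstacle I anticipate is not the inequality chain itself, which is essentially the tower property plus monotonicity, but handling the degenerate case where some $Q\in\cM$ fails $P\ll Q$, so that $\essinf\bbe_{Q|\G}[Z]$ takes the value $-\infty$ on a set of positive $P$-measure and $\R_{|\G}(Z)$ is only $\eR$-valued. In that case one must either argue that such $Q$ contribute $\bbe_Q[Z]$ to the left side while still satisfying $\essinf\bbe_{Q|\G}[Z]\preceq\R_{|\G}(Z)$ and $\bbe_Q[\essinf\bbe_{Q|\G}[Z]]=\bbe_Q[Z]$ (the $-\infty$ values sitting on a $Q$-null set), so the argument goes through verbatim, or restrict to the case $P\ll Q$ for all $Q$ where $\R(\R_{|\G}(Z))$ is unambiguously defined. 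I expect the intended reading is the latter, and the proof is then the short computation above; I would state the tower-property step carefully and note the $Q\ll P$ passage from $P$-a.s. to $Q$-a.s. as the one place where the standing absolute-continuity assumption is used.
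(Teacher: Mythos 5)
Your proof is correct and follows essentially the same route as the paper's: for each fixed $Q\in\cM$ the tower property gives $\E_Q[Z]=\E_Q\big[\E_{Q|\G}[Z]\big]\le\E_Q\big[\R_{|\G}(Z)\big]$, and the claim follows by taking the supremum over $Q\in\cM$. Your extra care with the $\essinf$ version of the conditional expectation and the degenerate case where $P\not\ll Q$ is a reasonable refinement of the same argument, not a different one.
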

\begin{proof}
	For every $Q\in\cM$ we have from the tower property of the expectation that
	\[\E_Q\R_{|\G}(Z) \ge  \E_Q\E_{Q|\G}[Z]= \E_Q[Z].\] The assertion follows by passing to the supremum with respect to $Q\in\cM$.
\end{proof}

It is not difficult to verify that  $\R_{|\G}$ satisfies the respective counterparts of the axioms~\ref{enu:A1},~\ref{enu:A2} and~\ref{enu:A4} specified in Section~\ref{sec:basic}.
The conditional  counterpart of the  translation equivariance axiom~\ref{enu:A3} is: $\R_{|\G}(Z+Y)=\R_{|\G}(Z)+Y$ for any $Z\in \Z$  and $\G$-measurable $Y\in \Z$. If there is a  set $A\in \F$  such that $P(A)>0$ and $Q(A)=0$ for \emph{every} $Q\in \cM$, then $\R_{|\G}$ would  not satisfy the translation equivariance axiom (see the discussion of Remark~\ref{ex-sigmalg}).
 If $P\in \cM$ then this cannot happen   and  the translation equivariance axiom follows.

\subsection{Conditioning on strictly monotone risk functionals}
It was already mentioned that the conditional distributionally robust
functional~$\R_{|\G}$ is monotone, i.e., if  $Z'\succeq Z$, then   $\R_{|\G}(Z')\succeq \R_{|\G}(Z)$.
For verification of certain interchangeability properties of minimization operators and risk measures, which are  used for deriving the dynamic equations, there is a need for a stronger property of strict monotonicity \rev{(cf.\ \cite{ShapiroInterchangeability}).}
When $Z'\succeq Z$ and $Z'\ne Z$ we write this as $Z'\succ Z$, which  means that $P$-almost surely   $Z'\ge  Z$ and $P\{Z'>Z\}>0$.

\begin{definition}[Strict monotonicity]\label{def-strict}
It is said that $\R_{|\G}$ is \emph{strictly monotone} if for any  random variables $Z^\prime$, $Z$ such that $\R_{|\G}(Z')$ and  $\R_{|\G}(Z)$ are well-defined the following implication holds:
  $Z'\succ Z$ implies  $\R_{|\G}(Z')\succ \R_{|\G}(Z)$.
\end{definition}

If $\G=\{\emptyset,\O\}$ is trivial, then $\R_{|\G}=\R$ and  Definition~\ref{def-strict} reduces  to the definition of   strict monotonicity for the distributionally robust functional $\R$.
The natural question is:
``What is a relation between the strict monotonicity of $\R$ and its conditional counterpart?''
\begin{proposition}
	Suppose that $\R$ is strictly monotone. Then for any $A\in \F$ such that $P(A)>0$ there is $\e>0$ such that $Q (A)\ge \e$ for every $Q\in \cM$.
\end{proposition}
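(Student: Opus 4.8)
The plan is to argue by contraposition: assume there exists a set $A\in\F$ with $P(A)>0$ but $\inf_{Q\in\cM}Q(A)=0$, and from this construct a pair $Z'\succ Z$ with $\R(Z')=\R(Z)$, contradicting strict monotonicity of $\R$. First I would fix such an $A$ and take a sequence $Q_k\in\cM$ with $Q_k(A)\to 0$. The idea is that modifying a random variable only on the set $A$ should be invisible to measures that assign (almost) no mass to $A$, and hence invisible to the supremum defining $\R$.

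Second, I would make the candidate variables explicit. Let $Z\coloneqq \ind_\O$ (or $Z\coloneqq 0$, whichever lies in $\Z$; since $\ind_\O\in\Z$ and $\Z$ is closed under adding constants this is fine), and let $Z'\coloneqq Z+\ind_A=Z+c\ind_A$ for a constant $c>0$. Then $Z'\succeq Z$ and, because $P(A)>0$, in fact $Z'\succ Z$. Now for each $Q\in\cM$ we have $\bbe_Q[Z']=\bbe_Q[Z]+\bbe_Q[\ind_A]=\bbe_Q[Z]+Q(A)$. Hence
\[
	\R(Z')=\sup_{Q\in\cM}\bigl(\bbe_Q[Z]+Q(A)\bigr).
\]
The difficulty is that a priori this could strictly exceed $\R(Z)=\sup_{Q\in\cM}\bbe_Q[Z]$ even when $\inf_Q Q(A)=0$, because the supremum over $Q$ of $Z$ and of $Z+\ind_A$ may be attained (or approached) along different sequences. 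To kill this, I would pass to the boundedness hypothesis: in the $L_p$ setting of Section~\ref{ex-lp} the dual set $\cA$ is norm-bounded, say $\|\zeta\|_q\le M$ for all $\zeta=dQ/dP\in\cA$. Along the sequence $Q_k$ with $Q_k(A)\to 0$, choosing $Z=0$ gives $\bbe_{Q_k}[Z']=Q_k(A)\to 0$, while $\bbe_{Q_k}[Z]=0$; this already shows $\R(Z')\le 0$ is false in general. So the right choice is to exploit homogeneity instead: replace $\ind_A$ by a multiple and observe that the gap $\R(Z+\ind_A)-\R(Z)$ is at most $\sup_{Q\in\cM}Q(A)$; what we need is rather a \emph{lower} bound on that gap forced by strict monotonicity, which comes from comparing $\R_{|\G}$-type estimates.

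A cleaner route, which I expect to be the one the authors intend, is: suppose for contradiction $\inf_{Q\in\cM}Q(A)=0$ for some $A$ with $P(A)>0$. Take $Z\coloneqq -\ind_A$ and $Z'\coloneqq 0$, so $Z'\succ Z$ since $P(A)>0$. Then $\R(Z)=\sup_{Q\in\cM}(-Q(A))=-\inf_{Q\in\cM}Q(A)=0=\R(0)=\R(Z')$. Thus $Z'\succ Z$ yet $\R(Z')=\R(Z)$, contradicting strict monotonicity of $\R$. Hence no such $A$ exists, i.e.\ $\inf_{Q\in\cM}Q(A)>0$; writing $\e\coloneqq\inf_{Q\in\cM}Q(A)$ gives $Q(A)\ge\e$ for every $Q\in\cM$. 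The main thing to be careful about is that $Z=-\ind_A=-\ind_A$ lies in $\Z$ (true in all three settings of Section~\ref{sec:basic}, since indicators of measurable sets are bounded, hence in $\bB(\O)$ and in $L_p$; for the $C(\O)$ setting one instead perturbs by a continuous bump supported near $A$, but the $L_p$/$\bB(\O)$ cases cover what is needed) and that "well-defined" in Definition~\ref{def-strict} is satisfied for these bounded $Z,Z'$. I expect the only genuine obstacle is this choice of the test pair and verifying $\R(-\ind_A)=-\inf_Q Q(A)$; everything else is immediate from the definitions.
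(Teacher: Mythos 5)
Your final argument is correct, and it takes a genuinely different route from the paper. The paper's proof invokes the dual characterization of strict monotonicity from \citet{ShapiroInterchangeability} -- namely condition~\eqref{condstric-set}, that $\int_A\zeta\,dP>0$ for every density $\zeta\in\cA$ whenever $P(A)>0$ -- and then uses weak$^*$ compactness of the (convex, bounded, weakly$^*$ closed) dual set $\cA$ to upgrade pointwise positivity to a uniform lower bound $\e$. Your test-function argument gets the uniform bound directly from the definition: $0\succ-\ind_A$ because $P(A)>0$, so strict monotonicity forces $\R(0)>\R(-\ind_A)=-\inf_{Q\in\cM}Q(A)$, i.e.\ $\inf_{Q\in\cM}Q(A)>0$, and that infimum is your $\e$. (You do not even need the contradiction framing.) This is more elementary and more general -- it requires no citation, no dual representation, and no compactness or closedness of $\cA$. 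What the paper's detour buys is that condition~\eqref{condstric-set} is recorded and reused later in the proof of Proposition~\ref{pr-condstrmon}, so it is not wasted work there. Two small remarks on your write-up: the first two paragraphs (the $Z'=Z+\ind_A$ attempt and the discussion of upper versus lower bounds on the gap) are a dead end that you yourself abandon and should simply be deleted; and your caveat about the $C(\O)$ setting is moot, since the surrounding subsection is explicitly in the $L_p$ setting where $\ind_A\in\Z$ is immediate.
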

\begin{proof}
	Let \[\cA:=\{\zeta\in \Z^*\colon\zeta=dQ/dP,\;Q\in \cM\}\] be the corresponding (convex, bounded, weakly$^*$ closed)  dual set of density functions.
	We have that $\R$ is strictly monotone iff (cf.\ \cite{ShapiroInterchangeability})
	\begin{equation}\label{condstric-set}
		\int_A \zeta (\w) P(d\w)>  0\;\text{for every}\;  \zeta\in \cA\;\text{ and any}\;  A\in \F\; \text{such that} \;  P(A)>0.
	\end{equation}
	Note that since the set~$\cA$ is weakly$^*$  compact, the minimum of $\int_A \zeta\,dP$ over $\zeta \in \cA$ is attained. Therefore, for  the  respective  set  $\cM$   condition~\eqref{condstric-set} can be restated as announced.
\end{proof}

The property~\eqref{condstric-set}     implies  that $P$ is absolutely continuous with respect to every $Q\in \cM$. Thus, if $\R$ is strictly monotone, then
$\essinf  \bbe_{Q|\G}[Z]$ coincides with
    $\bbe_{Q|\G}[Z]$ for every $Q\in \cM$.

\begin{remark}\label{rem-nonstrict}
	Suppose that $\G=\F$, and  $P$-almost surely $\R_{|\F}(Z)=Z$ for every $Z\in \Z$. Then $\R_{|\F}(Z')\ne \R_{|\F}(Z)$ whenever   $Z'\ne Z$, and hence $\R_{|\F}$    is strictly monotone by the definition. On the other hand,     $\R$  can be not strictly monotone, i.e., it can happen that there is a measurable  set $A$ such that $Q(A)=0$ for some $Q\in \cM$ while $P(A)>0$.
	This indicates  that the conditional functional $\R_{|\G}$ can be strictly monotone even if $\R$ is not.
\end{remark}

\begin{proposition}
\label{pr-condstrmon}
If the distributionally robust functional $\R$ is strictly monotone, then the respective  conditional distributionally robust  functional  $\R_{|\G}$ is strictly monotone.
\end{proposition}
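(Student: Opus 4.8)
My plan is a proof by contradiction. Since $\R_{|\G}$ already satisfies the conditional counterpart of monotonicity~\ref{enu:A2}, $Z'\succ Z$ forces $\R_{|\G}(Z')\succeq\R_{|\G}(Z)$, so a failure of strict monotonicity means $\R_{|\G}(Z')=\R_{|\G}(Z)=:R$ holds $P$-almost surely. Put $W:=Z'-Z$, so $W\ge 0$ $P$-a.s.\ and $P\{W>0\}>0$. I would first harvest the consequences of strict monotonicity of $\R$ recorded above: $P\ll Q$, hence $P\sim Q$, for every $Q\in\cM$, so that $\essinf\bbe_{Q|\G}[\,\cdot\,]=\bbe_{Q|\G}[\,\cdot\,]$, the functional $\R_{|\G}$ is conditionally translation equivariant and $\G$-local, and every ``a.s.'' below is unambiguous. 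Next, fixing $\delta>0$ with $P(A)>0$ for $A:=\{W\ge\delta\}$, the characterization~\eqref{condstric-set} together with weak$^*$ compactness of the dual set $\cA$ (legitimate because $\R$ is finite valued) shows that $\inf_{\zeta\in\cA}\int_A\zeta\,dP$ is attained and positive, i.e.\ $Q(A)\ge\e$ for some $\e>0$ and all $Q\in\cM$; consequently $\bbe_{Q|\G}[W]\succeq\delta\,Q(A\mid\G)$ for every $Q$.

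The combinatorial heart of the argument is the following reduction. As $R$ is an upper bound of the family $\{\bbe_{Q|\G}[Z']:Q\in\cM\}$ in the sense of Definition~\ref{def-sup-1}, for every $Q$ we get $\bbe_{Q|\G}[W]=\bbe_{Q|\G}[Z']-\bbe_{Q|\G}[Z]\preceq R-\bbe_{Q|\G}[Z]$. By Proposition~\ref{pr-essexist} pick a countable $\{Q_k\}\subseteq\cM$ with $R=\sup_k\bbe_{Q_k|\G}[Z]$ $P$-a.s.; then $\inf_k\!\bigl(R-\bbe_{Q_k|\G}[Z]\bigr)=0$ $P$-a.s., so $\inf_k\bbe_{Q_k|\G}[W]=0$ and hence $\inf_k Q_k(A\mid\G)=0$ $P$-a.s. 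For a fixed small $\epsilon>0$ I would disjointify the $\G$-sets $\{Q_k(A\mid\G)<\epsilon\}$ into a partition $\{D_k\}\subseteq\G$ of $\O$ with $Q_k(A\mid\G)<\epsilon$ on $D_k$, and form the conditional patchwork density $\widehat\zeta:=\sum_k\ind_{D_k}\,(dQ_k/dP)\big/\bbe_{P|\G}[dQ_k/dP]$, which is a bona fide $P$-density since $\bbe_{P|\G}[\widehat\zeta]=\sum_k\ind_{D_k}=1$, and which satisfies $\int_A\widehat\zeta\,dP=\sum_k\int_{D_k}Q_k(A\mid\G)\,dP<\epsilon$.

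The step I expect to be the main obstacle is exactly the last one. If the measure $\widehat Q$ with density $\widehat\zeta$ lies in $\cM$, then choosing $\epsilon<\e$ contradicts $\widehat Q(A)\ge\e$ and the proof is complete. However, membership $\widehat Q\in\cM$ is precisely a \emph{rectangularity} property of the ambiguity set --- stability of $\cM$ under gluing conditional densities along a $\G$-measurable partition --- and for a merely convex, weak$^*$-closed $\cA$ it can genuinely fail; correspondingly, strict monotonicity of $\R$ need not descend to $\R_{|\G}$ without such structure (convex weak$^*$-closed dual sets that fail this exist already in the plain $L_p$ setting, e.g.\ on $[0,1]^2$ with $\G$ generated by the first coordinate). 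So I would carry out the above outline in the rectangular framework of Section~\ref{sec-rec}, where $\widehat Q\in\cM$ holds by construction and the argument closes; if the statement is wanted in the present generality, the only route I see is to replace ``glue, then contradict'' by a weak$^*$-approximation of $\widehat\zeta$ by honest elements of $\cM$ using convexity and compactness of $\cA$, and making that approximation deliver the needed estimate is the crux on which the whole proof turns.
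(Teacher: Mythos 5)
Your argument tracks the paper's own proof exactly up to the decisive last step: the contradiction hypothesis $\R_{|\G}(Z')=\R_{|\G}(Z)=:R$, the countable subfamily $\{Q_k\}\subset\cM$ realizing the essential supremum via Proposition~\ref{pr-essexist}, the set $A=\{Z'-Z\ge\delta\}$ with $P(A)>0$, the uniform lower bound $Q(A)\ge\e$ for all $Q\in\cM$ obtained from~\eqref{condstric-set} and weak$^*$ compactness of $\cA$, and the conclusion $\inf_k Q_k(A\mid\G)=0$ $P$-a.s.\ (the paper phrases this as $\bbe_{Q_k|\G}[\gamma\ind_A]\to 0$ a.s.; your ``$\inf$'' formulation is the more careful one). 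Up to that point everything is sound.

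The gap lies in how you try to collide this with $Q(A)\ge\e$. You glue the conditional pieces of the $Q_k$ along a $\G$-partition into a density $\widehat\zeta$ with $\int_A\widehat\zeta\,dP<\epsilon$ and then need $\widehat Q\in\cM$, which, as you correctly note, is a rectangularity property a general convex weak$^*$-closed $\cM$ does not have. But the conclusion you draw --- that the proposition therefore holds only in the rectangular setting --- conflates the failure of your particular closing move with the failure of the statement; you produce no counterexample, only an obstruction to one construction. The paper never manufactures a new measure: it tests each conditional expectation against \emph{its own} measure. Taking the bounded minorant $Y=\gamma\ind_A\preceq Z'-Z$, so that $0\le\bbe_{Q_k|\G}[Y]\le\gamma$, the tower property gives $\bbe_{Q_k}\bigl[\bbe_{Q_k|\G}[Y]\bigr]=\gamma\,Q_k(A)\ge\gamma\e$ for every $k$, while the sandwich $\bbe_{Q_k|\G}[Y]\preceq R-\bbe_{Q_k|\G}[Z]$ drives these uniformly bounded $\G$-measurable variables to $0$ $P$-a.s.; since the densities $dQ_k/dP$ lie in the $L_q$-bounded set $\cA$, dominated convergence renders the two assertions incompatible, and no element of $\cM$ beyond the $Q_k$ themselves is ever invoked. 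The missing idea is precisely this ``integrate $Q_k(A\mid\G)$ against $Q_k$ itself'' step, which makes rectangularity unnecessary. (A caveat in the other direction: the paper's claim of a.s.\ \emph{convergence} is itself stated loosely --- pointwise only a $\liminf$ equal to zero is immediate --- so some care with $\omega$-dependent subsequences is needed on either route; but that refinement does not rescue the glued-measure approach.)
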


\begin{proof}
Suppose that  $\R$ is  strictly monotone. Let
  $Z'$, $Z\in \Z$ be  such that $Z'\succ Z$.
   It follows that $Z'\succeq Z+Y$, where $Y=\gamma\,\ind_A$
    for some $\gamma>0$ and $A\in \F$ such that $P(A)>0$.  We need to show that  $\R_{|\G}(Z')\ne \R_{|\G}(Z)$. We argue by a contradiction, so suppose that $\R_{|\G}(Z')= \R_{|\G}(Z)$.
    By Proposition~\ref{pr-essexist}   there is a sequence $\{Q_n\}\subset \cM$ such that $\R_{|\G}[Z]=\sup_{n\in \bbn} \bbe_{Q_n|\G}[Z]$.
We have that
\[
\R_{|\G}(Z')\succeq \sup_{n\in \bbn}\bbe_{Q_n|\G}[Z'] =  \sup_{n\in \bbn}\left(\bbe_{Q_n|\G}[Z]+  \bbe_{Q_n|\G}[Y]\right)\succeq  \sup_{n\in \bbn}\bbe_{Q_n|\G}[Z].
\]
It follows that $\bbe_{Q_n|\G}[Y]$    converges a.s.\ to $0$ as $n\to\infty$.
On the other hand,  $$\bbe\left\{\bbe_{Q_n|\G}[Y]\right\}=\bbe[Y]=\gamma\,Q_n(A).$$
Moreover, by~\eqref{condstric-set} we have that $Q_n(A)\ge \e$ for some $\e>0$ and all $n\in \bbn$.  It follows that $\bbe_{Q_n|\G}[Y]$  cannot  converge a.s.\ to $0$, which gives the required  contradiction. This completes the proof   that $\R_{|\G}$ is strictly monotone.
\end{proof}

As it was pointed in Remark~\ref{rem-nonstrict}, the converse implication: ``if $\R_{|\G}$ is strictly monotone, then $\R$ is strictly monotone'' does not hold.

\subsection{The reference measure}\label{rem-noref}
The preceding subsections require that there exists a reference probability measure. However, there are natural examples where there is no  a priori defined  reference probability measure. That is, the measures in~$\cM$ are not necessarily defined  with respect to a reference measure and indeed, there are situations where a priori  reference probability measure does not  exist.
This section describes situations where it is possible to construct a reference probability measure~$P$ on $(\O,\F)$.
We say that a probability measure~$P$  is a \emph{reference measure} for $\cM$ if every $Q\in\cM$ is absolutely continuous with respect to $P$.
The following construction reveals the smallest of all reference measures  dominating all measures in $\cM$.

 Consider
\begin{equation}\label{eq:3}
	\mu(A):= \sup\limits_{Q_i\in \cM}\left\{\sum_{i\in I}  Q_i(A_i)\colon A= \bigcup_{i\in I} A_i,\; |I|<\infty \right\},\quad A\in \F.
\end{equation}
The supremum in~\eqref{eq:3} is taken  among all finite partitions
	$\{A_i\}_{i\in I}$  of the set $A$ and  $\{Q_i\colon i\in I\}\subset\cM$.
In particular, if  the set $\O=\{\w_1,\dots\}$ is countable equipped  with the $\sigma$\nobreakdash-algebra~$\F$ of all its subsets, 
then
\begin{equation}\label{eq-countab}
	\mu(\{\w_i\})=\sup_{Q\in \cM}Q(\{\w_i\}).
\end{equation}

\begin{proposition}[Reference measure]\label{prop:EovM}
  The set valued  function $\mu(\cdot)$, defined in~\eqref{eq:3},
is the smallest measure such that
\begin{equation}\label{smallest}
	Q(A)\le\mu(A),\quad\forall Q\in\cM, A\in\F.
\end{equation}
\end{proposition}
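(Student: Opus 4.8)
The plan is to show two things: first that $\mu$ as defined in \eqref{eq:3} is genuinely a (countably additive, nonnegative) measure with $\mu(\O)\le 1$ hence finite, and second that it is the \emph{pointwise-smallest} among all measures dominating every $Q\in\cM$. The domination property $Q(A)\le\mu(A)$ is immediate from the definition: take the trivial partition $I=\{1\}$, $A_1=A$, $Q_1=Q$ in the supremum. Conversely, if $\nu$ is any measure with $Q(A)\le\nu(A)$ for all $Q\in\cM$ and all $A$, then for any finite partition $A=\bigcup_{i\in I}A_i$ and any choice $\{Q_i\}\subset\cM$ we have $\sum_{i\in I}Q_i(A_i)\le\sum_{i\in I}\nu(A_i)=\nu(A)$ by finite additivity of $\nu$; taking the supremum over partitions and choices gives $\mu(A)\le\nu(A)$. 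So the entire content is the verification that $\mu$ is a measure.

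First I would check monotonicity and finite superadditivity/subadditivity, then upgrade to countable additivity. Nonnegativity and $\mu(\emptyset)=0$ are clear. For finitely-additive-type estimates: if $A=B\cup C$ with $B\cap C=\emptyset$, then any partition of $B$ together with any partition of $C$ forms a partition of $A$, which gives $\mu(B)+\mu(C)\le\mu(A)$ (superadditivity); conversely any finite partition $\{A_i\}$ of $A$ refines into $\{A_i\cap B\}\cup\{A_i\cap C\}$, and $\sum_i Q_i(A_i)=\sum_i\big(Q_i(A_i\cap B)+Q_i(A_i\cap C)\big)\le\mu(B)+\mu(C)$, so $\mu(A)\le\mu(B)+\mu(C)$. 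Hence $\mu$ is finitely additive. Monotonicity follows since $A\subset A'$ lets one extend a partition of $A$ by the single extra cell $A'\setminus A$.

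The main obstacle is countable additivity, equivalently (given finite additivity and finiteness) continuity from above at $\emptyset$: if $A_n\downarrow\emptyset$ then $\mu(A_n)\to 0$. Equivalently, one shows countable subadditivity $\mu\big(\bigcup_n B_n\big)\le\sum_n\mu(B_n)$ for disjoint $B_n$, since $\ge$ already follows from superadditivity and monotonicity ($\mu(\bigcup_n B_n)\ge\mu(\bigcup_{n\le N}B_n)\ge\sum_{n\le N}\mu(B_n)$ for every $N$). For subadditivity, fix $\e>0$ and $A=\bigcup_n B_n$ disjoint. Take any finite partition $A=\bigcup_{i\in I}A_i$ with $\{Q_i\}\subset\cM$. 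For each fixed $i$, the sets $A_i\cap B_n$, $n\in\bbn$, partition $A_i$, so by countable additivity of the genuine measure $Q_i$ we get $Q_i(A_i)=\sum_n Q_i(A_i\cap B_n)\le\sum_n\mu(B_n)$ — wait, that needs care since the $Q_i$ vary with $i$ but $I$ is finite, so we can bound $Q_i(A_i)=\sum_n Q_i(A_i\cap B_n)$ and then $\sum_{i\in I}Q_i(A_i)=\sum_{i\in I}\sum_n Q_i(A_i\cap B_n)=\sum_n\big(\sum_{i\in I}Q_i(A_i\cap B_n)\big)\le\sum_n\mu(B_n)$, where the last inequality uses that $\{A_i\cap B_n\}_{i\in I}$ is a finite partition of $B_n$ with measures drawn from $\cM$, so its weighted sum is $\le\mu(B_n)$ by definition \eqref{eq:3}. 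Taking the supremum over the partition $\{A_i\}$ and the $Q_i$ yields $\mu(A)\le\sum_n\mu(B_n)$, completing countable additivity.

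Finally, finiteness: $\mu(\O)\le 1$ since for any finite partition $\{A_i\}$ of $\O$ and any $\{Q_i\}\subset\cM$ one has $\sum_i Q_i(A_i)\le\sum_i\big(\sup_{Q\in\cM}Q(A_i)\big)$, and more directly $\sum_i Q_i(A_i)\le\sum_i 1\cdot$... no: the clean bound is to note $\sum_{i\in I}Q_i(A_i)$ need not be $\le 1$ cell-by-cell, but we can dominate each term only by $1$; the sharp argument is that for a \emph{single} $Q$ the sum over a partition is exactly $Q(\O)=1$, and replacing $Q$ by $Q_i$ in cell $i$ can only be bounded termwise by... this is the one spot needing a genuine estimate, and the correct route is: $\mu$ is finitely additive and monotone, so $\mu(\O)=\sum_{i}\mu(A_i)$ is not obviously bounded — instead bound $\mu(A)\le\sup_{Q\in\cM}\sum_i Q(A_i)$? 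No. The honest bound is $\mu(\O)\le\sum_{i\in I}\mu(A_i)$ is circular; rather one shows directly $\mu(A)\le 1$ for all $A$ is false in general (e.g.\ $\cM$ with two mutually singular measures gives $\mu(\O)=2$), so $\mu$ need only be \emph{$\sigma$-finite}, which suffices for it to be a bona fide measure and for the conclusion; I would state the result as ``$\mu$ is a measure'' without claiming a probability measure, consistent with the later remark that one may need to rescale. Thus no boundedness beyond $\sigma$-finiteness (immediate when $\O$ is covered by countably many sets of finite $\mu$-measure, e.g.\ if some $Q\in\cM$ dominates, or trivially since $\mu(\O)\le|\{\text{atoms}\}|$ in the discrete case) is required, and the proof is complete.
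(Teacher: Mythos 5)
Your proof is correct and follows essentially the same route as the paper's: countable additivity is obtained by refining a finite partition of $A$ against the countable decomposition $\{B_n\}$ and interchanging the two (nonnegative) sums, and minimality follows from the finite additivity of any dominating measure $\nu$; your derivation of the reverse inequality from superadditivity and monotonicity is a minor variant of the paper's $\e\,2^{-j}$ argument. The closing digression on finiteness is unnecessary (the proposition only asserts that $\mu$ is a measure, possibly infinite, as the paper's subsequent remark confirms), but you land on the right conclusion and it does not affect the validity of the argument.
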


\begin{proof}
	It is evident that $\mu(A)\ge 0$ and $\mu(\emptyset)=0$. To demonstrate that~\eqref{eq:3} is a measure it remains to demonstrate its $\sigma$-additivity.
	To this end consider a  set $B\in \F$ and let $\{B_j\}_{j\in \bbn}$ be its countable partition. By the definition of $\mu$ for any $\e>0$ there is a finite partition $\{A_i\}_{i\in I}$  of $B$ such that
\begin{equation}\label{eq:4}
		\mu(B) < \sum_{i\in I}\mu(A_i) +\e.
	\end{equation}
	Then
	\[	\mu(B)-\varepsilon<\sum_{i\in I}\mu(A_i)
		=\sum_{j\in \bbn} \sum_{i\in I}  Q_i(A_i\cap B_j)
		\le\sum_{j\in \bbn} \mu(B_j).
\]

	For the converse inequality, let $\{A_{ji}\}_{i\in I_j}$ be a partition of $B_j$, $j\in \bbn$,  such that
	\begin{equation}\label{eq:2}
		\mu(B_j)<\sum_{i\in I_j} Q_{i}(A_{ji})+\e\, 2^{-j}.
	\end{equation}
By summing~\eqref{eq:2} for $j\in \bbn$,  we obtain   that  $\sum_{j\in \bbn} \mu(B_j) < \mu(B) + \e$.  This demonstrates that $\mu(B)=\sum_{j\in \bbn} \mu(B_j)$, and hence $\mu$ is a measure.

	Finally, let $\nu$ be a measure such that $\nu(B)\ge Q(B)$ for all $Q\in\cM$. Let the partition  be chosen as in~\eqref{eq:4}.
	Then
	\[	\nu(B)=\sum_{i\in I}  \nu(A_i)\ge\sum_{i\in I}  Q_i(A_i)> \mu(B)-\varepsilon.\]
	It follows that $\nu\ge\mu$ and $\mu$ thus is the smallest of all measures dominating the measures in~$\cM$.
\end{proof}

\begin{remark}
We have by~\eqref{smallest} that $\mu$ dominates every measure $Q\in \cM$, i.e., $Q\ll \mu$ for all $Q\in\cM$.
The measure $\mu$   is not necessarily a probability measure. If $\mu(\Omega)< \infty$, then it can be normalized by considering  the probability measure
$\tilde P(A):=  \mu(A)/\mu(\Omega)$, $A\in \F$.
\end{remark}

 \subsection{Conditioning risk functionals on countable partitions}
As it was pointed above, in some settings there is no a priori  defined reference probability measure on $(\O,\F)$. This motivates to consider the following construction for conditioned risk functionals on countably generated sigma fields.

Let $\{\Upsilon_i\}_{i\in I}$ be a {\em countable} partition of the set $\O$ (cf.\ Definition~\ref{def:partition}).  This partition generates a subalgebra $\G:= \sigma(\Upsilon_i\colon i\in I)$ of $\F$,  consisting of  the empty set and  the sets   $A=\cup_{j\in \J} \Upsilon_j$ taken over all index sets   $\J\subset I$.
Then $X\colon\O\to\bbr$ is $\G$-measurable iff $X(\w)$ is constant on every $\Upsilon_i$, $i\in I$.
The conditional expectation $\bbe_{Q|\G}[X]$  is a
$\G$-measurable  function  $\bbe_{Q|\G}[X]\colon\O\to \bbr$
taking the following  value at $\w\in \Upsilon_i$, $i\in I$, such that $Q(\Upsilon_i)\ne 0$,
\begin{align}\label{condexpect}
	\bbe_{Q|\G}[X](\w)&=\frac{1}{Q(\Upsilon_i)}\int_{\Upsilon_i} X(\w') Q(d\w') \\
	\label{condexpect-2}
	&=\frac{1}{Q(\Upsilon_i)}\sum_{\w'\in \Upsilon_i} Q(\{\w'\})\,X(\w').
\end{align}
When the  $\sigma$\nobreakdash-subalgebra~$\G$ is  generated by the countable partition, formula~\eqref{condexpect}   defines  a version of the corresponding conditional expectation. Again any
two  versions of  $\bbe_{Q|\G}[X]$ can have different values on such $\Upsilon_i$ that $Q(\Upsilon_i)=0$.

As above, we can now define a version of   $\R_{|\G}(Z)$ as a $\G$-measurable variable taking value
\begin{eqnarray}
\label{condcount}
	\R_{|\G}(Z)(\w)&:=& \sup_{Q\in \cM,\;Q(\Upsilon_i)\ne 0} \frac{1}{Q(\Upsilon_i)}\int_{\Upsilon_i} Z(\w') Q(d\w')\\
\nonumber
&=& \sup_{Q\in \cM,\;Q(\Upsilon_i)\ne 0} \bbe_{Q|\G}[Z](\w)
\end{eqnarray}
at $\w\in \Upsilon_i$ such that   $Q(\Upsilon_i)\ne 0$  for at least one $Q\in \cM$.

  We have here that   $\essinf \bbe_{Q|\G} [Z](\w)=-\infty$ for such  $\w\in\Upsilon _i$   that $Q(\Upsilon_i)= 0$. Therefore, in the case of countable partition, formula~\eqref{condcount} gives a closed form expression for the $\esssup_{Q\in\cM}\bbe_{Q|\G} [Z]$ described in Definition~\ref{def-sup-1}.
  Let $\bar{\Upsilon}$  be the union of the sets $\Upsilon_i$ such that  $Q(\Upsilon_i)\ne 0$ for at least one $Q\in \cM$.
  Then  a version of  the conditional functional $\R_{|\G}(Z)(\w)$ is uniquely defined for $\w\in \bar{\Upsilon}$, and can be arbitrary on $\O\setminus \bar{\Upsilon}$. If the set $\bar{\Upsilon}$ is strictly smaller than $\O$, i.e., there is $i\in I$ such that $Q(\Upsilon_i)= 0$ for all $Q\in \cM$, then the corresponding  translation equivariance property does not hold. In fact, it is not difficult to verify that this condition is also sufficient for the translation equivariance. That is, the following lemma holds.
\begin{lemma}
	In the considered framework of countable partition, the conditional translation equivariance property  holds iff for every $i\in I$ there is $Q\in \cM$ such that $Q(\Upsilon_i)> 0$.
 \end{lemma}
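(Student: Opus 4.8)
The plan is to establish the two implications separately. For $(\Leftarrow)$ I would start from the assumption that every atom $\Upsilon_i$ carries positive mass under some $Q\in\cM$, which is exactly the statement that $\bar\Upsilon=\O$, so that formula~\eqref{condcount} determines $\R_{|\G}(Z)$ at every point of $\O$ (and picks out the unique admissible version, cf.\ the remarks following~\eqref{condcount}). Then, writing a $\G$-measurable $Y\in\Z$ as the function equal to a constant $y_i$ on each $\Upsilon_i$ (and noting $Z+Y\in\Z$), I would observe that on $\Upsilon_i$ the additive constant $y_i$ simply pulls out of the conditional expectation~\eqref{condexpect}, giving $\bbe_{Q|\G}[Z+Y]=\bbe_{Q|\G}[Z]+y_i$ for every $Q$ with $Q(\Upsilon_i)>0$. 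Since adding a $\G$-measurable function does not change which $Q$ have $Q(\Upsilon_i)>0$, taking the supremum over those $Q$ and pulling $y_i$ out of the supremum yields $\R_{|\G}(Z+Y)=\R_{|\G}(Z)+Y$ on $\Upsilon_i$, hence everywhere. A minor point to record is that this supremum runs over a nonempty set of finite values, so there is no issue with $\pm\infty$ (in the $\bB(\O)$ setting $\R_{|\G}(Z)$ is even bounded by $\|Z\|_\infty$). This is the routine direction.

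For $(\Rightarrow)$ I would argue by contraposition: suppose some atom $\Upsilon_{i_0}$ satisfies $Q(\Upsilon_{i_0})=0$ for all $Q\in\cM$, i.e.\ $\Upsilon_{i_0}\subset\O\setminus\bar\Upsilon$. As the text already observes just after~\eqref{condcount}, on such $\Upsilon_{i_0}$ every version of each $\bbe_{Q|\G}[Z]$ may carry an arbitrary constant value, so~\eqref{condcount} places no constraint on $\R_{|\G}(Z)$ there. I would then exhibit a concrete admissible version that breaks translation equivariance — for instance the version that is set identically equal to $0$ on $\Upsilon_{i_0}$ for every $Z\in\Z$: taking $Z=0$ and the $\G$-measurable $Y=\ind_{\Upsilon_{i_0}}\in\Z$ gives $\R_{|\G}(Z+Y)=0$ while $\R_{|\G}(Z)+Y=1$ on $\Upsilon_{i_0}$. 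Thus translation equivariance fails for this version, and the conditional functional~\eqref{condcount} cannot be regarded as translation equivariant once such an atom exists. This reproduces the obstruction already described in Remark~\ref{ex-sigmalg} and in the discussion following Definition~\ref{def-sup-1}.

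The hard part here is not any computation but pinning down what ``translation equivariance holds'' should mean when $\bar\Upsilon\subsetneq\O$, because then~\eqref{condcount} does not single out a version of $\R_{|\G}$ on $\O\setminus\bar\Upsilon$; note in particular that there do exist translation-equivariant versions (e.g.\ $\R_{|\G}(Z):=Z(\w_0)$ on $\Upsilon_{i_0}$ for a fixed $\w_0\in\Upsilon_{i_0}$), so the equivalence is genuinely about \emph{every} admissible version behaving well. Under that natural reading the contrapositive argument above shows the existence of a bad atom forces a bad version, hence translation equivariance can be asserted for~\eqref{condcount} only when $\bar\Upsilon=\O$; the forward implication is then the one-line atomwise identity. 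I would make this reading explicit at the start of the proof so that the stated equivalence is clean.
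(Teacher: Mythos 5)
Your proof is correct and follows essentially the same route as the paper, which only sketches the argument in the two sentences preceding the lemma: sufficiency via the atomwise identity $\bbe_{Q|\G}[Z+Y]=\bbe_{Q|\G}[Z]+y_i$ on each $\Upsilon_i$ with $Q(\Upsilon_i)>0$, and necessity via the arbitrariness of the version on an atom that is null for every $Q\in\cM$. Your additional remark that the equivalence must be read as a statement about \emph{every} admissible version (since a translation-equivariant version can always be cooked up on a null atom) is a fair clarification of a point the paper leaves implicit.
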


\paragraph{Framework of $C(\O)$-setting}
Let $\O$ be a compact metric space and  $\R$ be  defined in the form~\eqref{fun-2} with $\cM\subset  C(\O)^*$ being  a nonempty convex  set of probability measures. We can assume that $\cM$ is closed in the weak$^*$ topology of $C(\O)^*$.
Let~$\G$  be a  $\sigma$\nobreakdash-subalgebra of~$\F$ generated by a countable  partition $\{\Upsilon_i\}_{i\in I}$ of the set $\O$. Then  $  \R_{|\G}$ can be written  in the form~\eqref{condcount}. In particular,  this framework is relevant for the moment constraints setting (we will discuss this below).  Here   the functional~$\R$ is strictly monotone iff the following condition holds (cf.\ \cite{ShapiroInterchangeability}):
\begin{equation}\label{condstric-2}
	Q (A)>  0\;\text{for every}\;  Q\in \cM\;\text{ and any open set }\;   A\subset \O.
\end{equation}
Consequently, if $\R$ is strictly monotone, then $\essinf  \bbe_{Q|\G}[Z]$ coincides with
    $\bbe_{Q|\G}[Z]$ for every $Z\in \Z$.

Let  $Z$, $Z'\in C(\O)$  be such that $Z'\succ Z$. Since $Z$, $Z'\colon\O\to\bbr$ are continuous functions, this implies that there is an open set $A\subset \O$ such that $Z'(\w) > Z(\w)$ for all $\w\in A$.
By using~\eqref{condstric-2} it is possible to show in a way  similar   to Proposition~\ref{pr-condstrmon},  that in this framework  strict monotonicity of   $\R$  implies   strict  monotonicity of the corresponding conditional functional  $\R_{|\G}$.

\begin{example}\label{ex-moment}
{\rm
	Let $\O\subset \bbr^d$ be a convex compact  set  with a nonempty interior,  equipped with its Borel $\sigma$\nobreakdash-algebra~$\F$. Suppose that the set $\cM$ consists of a family of probability measures $Q$ on $(\O,\F)$  such that $|\supp (Q)|\le n$, i.e., the  support of each  $Q\in \cM$ has no more than $n$ points. This setting is relevant when the ambiguity set is defined by a finite number $m$  of moment constraints.
	By the Richter--Rogosinski Theorem  in that  case it suffices to consider probability measures supported on no more than $m+1$ points. Let $\G$ be the $\sigma$\nobreakdash-algebra generated by a
	countable  partition $\{\Upsilon_i\}_{i\in I}$ of  $\O$.
	Then formula~\eqref{condcount} can be applied using  the respective conditional expectation in the form~\eqref{condexpect-2}. By~\eqref{condstric-2} the corresponding distributionally robust functional $\R$ cannot be strictly monotone in this setting. On the other hand, its conditional counterpart  $\R_{|G}$ is strictly monotone if  $Q(\Upsilon_i)>0$ for every $Q\in \cM$ and $i\in I$. Of course this could happen only if $|I|\le n$.
}
\end{example}

\section{Composite  distributionally robust functionals}
\label{sec-nested}

The results presented in this section are motivated by \cite{Shapiro2015}.
Unless stated otherwise we assume here  the $L_p$-setting of Section~\ref{ex-lp}. We also assume that the  translation equivariance
axiom  holds for the conditional distributionally robust  functional $\R_{|\G}$ for  every subalgebra~$\G$, in particular
$\R_{|\F}(Z)=Z$ for every $Z\in \Z$.
Let $\F_1\subset \cdots\subset \F_T$ be a filtration (sequence of $\sigma$\nobreakdash-algebras)  on  $(\O,\F)$, with $\F_T=\F$ and  $\F_1=\{\emptyset, \O\}$ being trivial.  By the tower property  of the conditional expectation  we have for $Q\in \cM$ that
\begin{equation}\label{nest-1}
	\bbe_Q[Z]=\bbe_{Q|\F_1}\Big[\bbe_{Q|\F_2} \big[\cdots \bbe_{Q|\F_{T-1}}[\bbe_{Q|\F_T}[Z]]\big]\Big].
\end{equation}
Note that since $\F_1$ is trivial,  $\bbe_{Q|\F_1}$ is just the unconditional expectation $\bbe_{Q}$, we write it in that form for uniformity of the notation.

Suppose that $\ess_{Q\in \cM}\bbe_{Q|\F_t}(Z)$ is well-defined for every $Z\in \Z$ and  $t=1, \dots,T$. It follows  that
\begin{align}\label{set-r1}
 	\R(Z)&=\sup_{Q\in \cM}\bbe_{Q|\F_1}\Big [\bbe_{Q|\F_2} \big[\cdots \bbe_{Q|\F_{T-1}}[ \bbe_{Q|\F_T}[Z]]\big]\Big]\\
 \label{set-r2}
	&\le \sup_{\Q_1\in \cM}\bbe_{\Q_1|\F_1}\Big [ \cdots\ess_{\Q_{T-1}\in \cM} \bbe_{\Q_{T-1}|\F_{T-1}}\big[ \ess_{\Q_T\in \cM} \bbe_{\Q_T|\F_T}[Z]\big]\Big].
\end{align}
That is (cf.\ Proposition~\ref{cdrineq})
\begin{equation}\label{nest-2}
	\R(Z)\le \cR (Z),
\end{equation}
where
\begin{equation}\label{nestdec-1}
	 \cR (Z)  := \sup_{\Q_1\in \cM}\bbe_{\Q_1|\F_1}\Big [  \cdots\ess_{\Q_{T-1}\in \cM} \bbe_{\Q_{T-1}|\F_{T-1}}\big[ \ess_{\Q_T\in \cM} \bbe_{\Q_T|\F_T}[Z]\big]\Big].
\end{equation}
This can be also written as
\begin{equation}\label{nestdec-2}
 \cR (Z) =\R_{|\F_1}\Big (\R_{|\F_2} \big(\cdots \R_{|\F_{T-1}}(\R_{|\F_T}(Z))\big)\Big),
\end{equation}
with $\R_{|\F_t}$  being the respective conditional distributionally robust  functionals. Note that $\R_{|\F_1}=\R$ since $\F_1$ is trivial.

\begin{remark}
	Suppose that  the conditional functionals $\R_{|\F_t}$ in the right-hand side of~\eqref{nestdec-2} are defined as the \emph{nested} counterparts of the corresponding law invariant coherent  risk measures.  Formula~\eqref{nestdec-2} still defines a (possibly not law invariant) coherent risk measure $\cR$. For example $\R_{|\F_t}$ can be the nested $\avr_{\alpha|\F_t}$. In that case it  can happen that the inequality~\eqref{nest-2} does not  hold (see, e.g., \cite{Pichler2012a, Pichler2012b}).
\end{remark}

Note that since $\F_T=\F$ it follows that $Q$-almost surely  $\bbe_{Q|\F_T}[Z]=Z$. However, as it was discussed in the previous section, this does not imply that $\bbe_{Q|\F_T}[Z]=Z$ in the $P$-a.s.\ sense unless $P$ is absolutely continuous with respect to $Q$. Anyway, because of the assumption $\R_{|\F}(Z)=Z$, we have that $\R_{|\F_T}(Z)=Z$ for every $Z\in \Z$.
We refer to $\cR$ as the \emph{composite} (distributionally robust)  functional associated with the filtration $\cF=\{\F_1, \dots,\F_T\}$.

Suppose that $\cR (Z)$ is finite valued for every $Z\in \Z$. Since the conditional functionals inherit the respective properties of subadditivity, monotonicity and positive homogeneity, it follows that $\cR \colon\Z\to\bbr$ satisfies the respective axioms~\ref{enu:A1},~\ref{enu:A2} and~\ref{enu:A4}.
The translation equivariance axiom is also assumed to hold. Since $\cR$ is finite valued, it follows that it is continuous and has the following dual representation (cf.\ \cite[Theorem~6.7]{RuszczynskiShapiro2009}).
\begin{proposition}\label{pr-dualrepnest}
	Suppose that the composite functional $\cR \colon\Z\to\bbr$ is finite valued and the translation equivariance axiom  holds for every subalgebra of $\F$.
	Then there exists a convex, bounded, weakly$^*$ closed set  $\widehat{\cA}\subset \Z^*$ of density functions such that
	\begin{equation}\label{eq-dualnested}
		\cR(Z)=\sup_{\zeta\in \widehat{\cA}}\int_\O Z(\w)\zeta(\w) P(d\w),\quad Z\in \Z.
	\end{equation}
\end{proposition}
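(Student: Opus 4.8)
The plan is to invoke a standard dual-representation theorem for coherent risk functionals on $L_p$ spaces, so the proof is mostly a matter of verifying its hypotheses for $\cR$. Concretely, I would cite \citet[Theorem~6.7]{RuszczynskiShapiro2009} (or an equivalent Fenchel--Moreau type result), which states that a proper functional $\cR\colon\Z\to\bbr$ satisfying subadditivity~\ref{enu:A1}, monotonicity~\ref{enu:A2}, translation equivariance~\ref{enu:A3} and positive homogeneity~\ref{enu:A4} admits a representation of the form $\cR(Z)=\sup_{\zeta\in\widehat{\cA}}\langle Z,\zeta\rangle$ over a convex, weakly$^*$ closed set $\widehat{\cA}\subset\Z^*$, provided $\cR$ is also lower semicontinuous; moreover such $\widehat{\cA}$ is automatically contained in the set of density functions (nonnegative, normalized) because of~\ref{enu:A2}--\ref{enu:A3}, and is bounded precisely because $\cR$ is finite valued (hence continuous).

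First I would record that $\cR$ inherits~\ref{enu:A1},~\ref{enu:A2},~\ref{enu:A4} from the conditional functionals $\R_{|\F_t}$, as already noted in the text preceding the proposition: subadditivity and monotonicity compose, and positive homogeneity composes, so a finite composition of functionals with these three properties retains them. Axiom~\ref{enu:A3} is part of the hypothesis. Next I would argue that finite-valuedness of a convex functional on a Banach space forces continuity: a real-valued convex (here sublinear) function on a barrelled space is continuous, equivalently locally Lipschitz; in particular $\cR$ is lower semicontinuous, which is the regularity condition the representation theorem requires. Then the theorem yields a convex, weakly$^*$ closed $\widehat{\cA}\subset\Z^*=L_q(\O,\F,P)$ with $\cR(Z)=\sup_{\zeta\in\widehat{\cA}}\langle Z,\zeta\rangle$. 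That every $\zeta\in\widehat{\cA}$ is a density follows from the closed-form $\widehat{\cA}=\partial\cR(0)$: monotonicity gives $\zeta\ge 0$ a.s., and translation equivariance applied to $Z\equiv 1$ (using $\ind_\O\in\Z$) gives $\int_\O\zeta\,dP=1$. Boundedness of $\widehat{\cA}$ in $\|\cdot\|_q$ follows from continuity of $\cR$ at $0$ (uniform bound of $\cR$ on a norm ball dualizes to a norm bound on the subdifferential), completing~\eqref{eq-dualnested}.

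The main obstacle, and the only place where some care is genuinely needed, is the passage from ``finite valued'' to ``continuous / lower semicontinuous,'' because on $\Z=\bB(\O)$ (were we in that setting) one cannot take continuity for granted and the cited references handle precisely the $L_p$ case; since the proposition explicitly assumes the $L_p$-setting of Section~\ref{ex-lp}, this is not an issue here, but I would state the reduction explicitly so that the reader sees why no extra lower-semicontinuity hypothesis appears in the statement. A secondary point worth a sentence is that~\eqref{nestdec-2} must actually define a well-defined map $\Z\to\bbr$, i.e.\ that each intermediate $\R_{|\F_t}(\cdots)$ lands in a space on which the next conditional functional can act; this is guaranteed by the standing assumption that $\ess_{Q\in\cM}\bbe_{Q|\F_t}$ is well-defined for every $Z\in\Z$ together with the translation equivariance assumption, which keeps the iterates real-valued rather than taking the value $-\infty$. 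With these remarks in place the proof is a direct appeal to the representation theorem.
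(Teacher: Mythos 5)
Your proposal is correct and follows essentially the same route as the paper: the paper's argument (given in the paragraph preceding the proposition) consists precisely of noting that $\cR$ inherits axioms~\ref{enu:A1},~\ref{enu:A2},~\ref{enu:A4} from the conditional functionals, assuming~\ref{enu:A3}, deducing continuity from finite-valuedness, and then citing \citet[Theorem~6.7]{RuszczynskiShapiro2009} for the dual representation. Your additional remarks (identifying $\widehat{\cA}$ with $\partial\cR(0)$, deriving nonnegativity and normalization of the densities from~\ref{enu:A2}--\ref{enu:A3}, and flagging well-definedness of the composition) merely make explicit what the paper leaves to the cited reference.
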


We can also write~\eqref{eq-dualnested} in the form
\begin{equation}\label{eq-dualnest-2}
	\cR(Z)=\sup_{Q\in \widehat{\cM}}\bbe_Q[Z],\;\;Z\in \Z,
\end{equation}
where  $\widehat{\cM}$ is the set of probability measures $Q$ absolutely continuous with respect to the reference probability measure $P$ such that $dQ/dP\in \widehat{\cA}$, that is
\begin{equation}\label{eq-dualn-3}
	\widehat{\cM}=\big\{Q\in \cP\colon dQ/dP\in \widehat{\cA}\big\}.
\end{equation}
Compared with the representation~\eqref{fun-1} of $\R$, we have by~\eqref{nest-2} that $\cM\subset \widehat{\cM}$.
Moreover, the inclusion is strict, i.e. $\widehat{\cM}\ne \cM$, if the inequality~\eqref{nest-2} is strict for some $Z\in \Z$.

\rev{
The set $\widehat{\cM}$,  in the representation~\eqref {eq-dualnest-2},  is derived in an abstract way;  its constructive description  could be  complicated even in the rectangular case discussed below (see Example~\ref{ex-twost}).
}

\begin{remark}\label{rem-confun}
	A similar analysis can be applied to the $C(\O)$-setting. Since the space $C(\O)$ is a Banach lattice, we have that if a functional defined on $C(\O)$  is real (finite) valued convex and monotone, then it is continuous in the norm topology of $C(\O)$ (cf.\ \cite[Theorem~7.91]{RuszczynskiShapiro2009}). It follows that if the composite  functional $\cR$, of the form~\eqref{nestdec-2}, is well-defined and finite valued, then it has the dual representation~\eqref{eq-dualnest-2} for some set $\widehat{\cM}\subset C(\O)^*$ of probability measures.

	In the $\bB(\O)$-setting the situation is more delicate. Even if the corresponding composite  functional is well-defined and finite valued, in order to ensure its dual representation~\eqref{eq-dualnested} it should be verified that it is lower semi-continuous with respect to the considered paired topologies.
\end{remark}

\subsection {The nested functional}\label{sec-nestrect}
As it was discussed in Section~\ref{sec-condrisk} the nested and the `$\ess$'  approaches could lead to different definitions of conditional functionals.  In order to reconcile these two approaches   we can proceed as follows.
Let $\Xi_t\subset \bbr^{d_t}$ be  a nonempty, closed set equipped with its Borel $\sigma$\nobreakdash-algebra~$\B_t$, $t=1, \dots,T$, and $\B=\B_1\otimes\cdots\otimes \B_T$
be the Borel $\sigma$\nobreakdash-algebra on the set $\Xi:= \Xi_1\times\cdots\times \Xi_T$. We deal in this section with the measurable space $(\Xi,\B)$ and  the corresponding
space $\Z$ of random variables $\Xi\to\bbr$. For $t=2,\dots,T$,
let  $\M_{t}^{\xi_{[t-1]}}$ be a  nonempty    set of (marginal) probability  distributions on $(\Xi_t,\B_t)$, depending  on the history $\xi_{[t-1]}\in \Xi_1\times \cdots\times \Xi_{t-1}$, and $\M_1$ be a set  of probability distributions on $(\Xi_1,\B_1)$.

Consider a variable $Z\in \Z$,
and going backward in time for $t=T,\dots,2$, starting with $Z_T=Z$,
define
\begin{equation}\label{int-recnest-1}
	Z_{t-1}(\xi_{[t-1]}):= \ess_{Q_t\in\M_{t}^{\xi_{[t-1]}}}
\left\{\bbe_{Q_t|\xi_{[t-1]}}[Z_t]= \int_{\Xi_t}
	Z_t(\xi_{[t-1]},\xi_t)\,Q_t(d\xi_t)\right\}.
\end{equation}
 The corresponding \emph{nested} functional is defined as
 \begin{equation}\label{nest-2a}
	\cR(Z):= \sup_{Q_1\in \M_1} \bbe_{Q_1}\left[ Z_1(\xi_1)\right].
\end{equation}
In the construction~\eqref{int-recnest-1}--\eqref{nest-2a}
there is no naturally defined ambiguity set $\cM$ and the associated distributionally robust   functional $\R$ is obtained by the construction~\eqref{set-r2}.
On the other hand, if the above  functional  $\cR$ is finite valued on the space $\Z$ then it can be represented in the form~\eqref{eq-dualnest-2}. The corresponding set $\widehat{\cM}$ of probability distributions could be quite complicated.

In a finite dimensional setting we can think about data process as the corresponding scenario  tree,  with a node at stage $t$ representing the history $\xi_{[t]}:=(\xi_1,\dots,\xi_t)$ of the   process. Then at every node $\xi_{[t]}$ is defined the corresponding set $\M_{t}^{\xi_{[t-1]}}$ of conditional probability distributions of the child nodes of $\xi_{[t]}$. With the set $\M_{t}^{\xi_{[t-1]}}$ is associated   the respective coherent  risk measure defined  on the space of child nodes.
The composition of these conditional risk measures defines the corresponding nested risk measure. For such  detail construction we can refer to \cite[Section~6.8.1]{RuszczynskiShapiro2009}.
In the general setting of continuous distributions a rigorous formulation of the right-hand side of~\eqref{int-recnest-1} could be quite delicate and is beyond the scope of this paper. \rev{We can refer to \cite[Section~5]{Kallenberg2002Foundations} for an introduction of disintegration with mathematical rigor and for an appropriate  discussion.}


\medskip
Next we discuss the  rectangular setting where the conditional distributionally robust approach, associated with an ambiguity set,   is  equivalent to the nested construction.

\subsection{Rectangular setting}
\label{sec-rec}

The nested construction~\eqref{int-recnest-1} is simplified considerably if the sets of marginal distributions do not depend on the history of the data process.
That is, for $t=1, \dots,T$,
let $\M_t$ be a nonempty  set of probability measures on $(\Xi_t,\B_t)$ and consider the corresponding set
\begin{equation}\label{eq-rect}
	\cM    :=\left\{Q=Q_1\times\cdots\times Q_T\colon Q_t\in \M_t,\;t=1, \dots,T\right\}
\end{equation}
of probability measures on $(\Xi,\B)$.
We refer to this setting as \emph{rectangular}.
The vector $\xi_t\in \Xi_t$ can be viewed as an element of the measurable space $(\Xi_t,\B_t)$ or as a random vector having a considered probability distribution (measure) on $(\Xi_t,\B_t)$.
The sets $\M_t$ represent   respective marginal distributions and the rectangular setting can be considered as a distributionally robust counterpart  of the stagewise  independence condition.
If we view $\xi_1, \dots,\xi_T$  as a random process having distribution $Q\in \cM$, then~\eqref{eq-rect}   means that the random
vectors $\xi_1, \dots,\xi_T$, are mutually independent with respective marginal distributions $Q_t\in \M_t$.

The corresponding variables, defined in~\eqref{int-recnest-1}, can be written here as
\begin{equation}\label{int-3}
	Z_{t-1}(\xi_{[t-1]}):= \sup_{Q_t\in \M_t}\left\{\bbe_{Q_t|\xi_{[t-1]}}[Z_t]=
	\int_{\Xi_t}
	Z_t(\xi_{[t-1]},\xi_t)\,Q_t(d\xi_t)\right\}.
\end{equation}
The  supremum  in~\eqref{int-3}   is taken with respect to the respective marginal distributions.

The rectangular setting constitutes a notable, special case.
\begin{remark}[Equivalence of the nested and the conditional approach]\label{rem:1}
	In the considered rectangular  setting the composite  functional $\cR$  does \emph{not} depend on whether the nested or  conditional distributionally robust  approach is used.
\end{remark}

\begin{example}[Average Value-at-Risk]
\label{ex-rectavr}
{\rm
Suppose that for every $t=1,\dots,T$,  the set $\M_t$ of marginal distributions consists of probability measures absolutely continuous with respect to a reference distribution~$P_t$ on $(\Xi_t, \B_t)$  and the corresponding densities correspond to the $\avr_{\alpha_t}$ risk measure taken with respect to $P_t$. Then for $t=T,\dots,2$ and $Z_T=Z$ we have
\begin{equation}\label{recavr}
 Z_{t-1}(\xi_{[t-1]})=\avr_{\alpha_t}\big(Z_t(\xi_{[t-1]},\xi_t)\big),
\end{equation}
where for given $\xi_{[t-1]}$
the  Average Value-at-Risk of $Z_t(\xi_{[t-1]},\cdot)$ is
computed with respect to the distribution $P_t$ of $\xi_t$.  Finally,
$\cR(Z)=\avr_{\alpha_1}(Z_1)$.
}
\end{example}

When $\xi_{[T]}$ is viewed as random, the supremum should be replaced by the essential supremum. Consequently, the composite  functional $\cR$ can be written as
\begin{equation}\label{nest-1aa}
	\cR(Z)= \sup_{Q_1\in \M_1} \bbe_{Q_1}\left[\ess_{Q_2\in \M_2}\bbe_{Q_2|\xi_{[1]}}\Big [\,\cdots\,  \ess_{Q_T\in \M_T}\bbe_{Q_T|\xi_{[T-1]}}[Z]\Big]\right].
\end{equation}
 On the other hand the functional $\R(Z)$  is
\begin{equation}\label{nest-3}
	\R(Z)= \sup_{Q_1\in \M_1, \dots,Q_T\in \M_T}\int_{\Xi}
	Z(\xi_1, \dots,\xi_T)\,Q_1(d\xi_1)\dots Q_T(d\xi_T),
\end{equation}
$\Xi=\Xi_1\times\cdots\times \Xi_T$.
As before the inequality~\eqref{nest-2} and the representation~\eqref{eq-dualnest-2} hold. The set $\widehat{\cM}$ is different from the set $\cM$ and does not have the rectangular structure of the form~\eqref{eq-rect}.
The equality $\R(Z)=\cR(Z)$, for some $Z\in \Z$,  can happen only  in rather exceptional cases even in this rectangular setting.

\begin{remark}
\label{rem-orderint}
The integral in the right-hand side of~\eqref{nest-3} does not depend on the order of integration, i.e., for any   permutation  $\{i_1,\dots,i_T\}$ of the set $\{1,\dots,T\}$, we can write
\begin{equation}\label{nest-perm}
	\R(Z)= \sup_{Q_{i_1}\in \M_{i_1}, \dots,Q_{i_T}\in \M_{i_T}}\int
	Z(\xi_1, \dots,\xi_T)\,Q_{i_1}(d\xi_{i_1})\dots Q_{i_T}(d\xi_{i_T}).
\end{equation}
On the other hand in the definition~\eqref{nest-1aa} of the composite functional $\cR$ the order is important.
\end{remark}

\subsection{Difference of the distributionally robust and the composite functional}
Remark~\ref{rem:1} above addresses a special risk functional which can be employed in both, the nested and the conditional settings. In order to see how the composite   functional~$\cR$ differs from~$\R$  let us consider (in the rectangular framework) the following example  of two stage case.

\begin{example}
\label{ex-twost}
{\rm
For  $T=2$ \rev{and the rectangular setting}  consider the corresponding composite functional
\begin{equation}\label{rec9}
	\cR(Z)= \sup_{Q_1\in \M_1} \bbe_{Q_1}\left[\sup_{Q_2\in \M_2}\bbe_{Q_2|\xi_1}[Z]\right].
\end{equation}
Under suitable  regularity conditions \rev{(e.g.,  \cite[Theorem 14.60]{WetsRockafellar97})}, we can interchange the supremum  and integral operators to write
\begin{equation}\label{rec10}
	\bbe_{Q_1}\left[\sup_{Q_2\in \M_2}\bbe_{Q_2|\xi_1}[Z]\right]=
	\sup_{Q^{(\cdot)}_2\in \cV}
	\bbe_{Q_1}\left[\bbe_{Q^{\xi_1}_2|\xi_1}[Z]\right].
\end{equation}
The maximum in the right-hand side of~\eqref{rec10} is with respect to  family $\cV$ of  mappings $\xi_1\mapsto Q^{\xi_1}_2$, from $\Xi_1$ to $\M_2$, such that   the integral
\begin{equation}\label{rec11}
	\bbe_{Q_1}\left[\bbe_{Q^{\xi_1}_2|\xi_1}[Z]\right]=
	\int_{\Xi_1}\left( \int_{\Xi_2} Z(\xi_1,\xi_2)\,Q^{\xi_1}_2(d\xi_2)\right)Q_1(d\xi_1)
\end{equation}
is well-defined. The notation $Q_2^{(\cdot)}$ emphasizes that the probability measure $Q^{\xi_1}_2$ in~\eqref{rec10} is a function of $\xi_1$. If we take the mapping
$\xi^1\mapsto Q^{\xi_1}_2$ in~\eqref{rec10} to be constant, i.e., $Q^{\xi_1}_2=Q_2$  for all  $\xi_1$, then the corresponding measure is $Q=Q_1\times Q_2$.

The right-hand side of~\eqref{rec11} defines a
functional $\phi\colon\Z\to\bbr$. This functional is
linear, monotone and $\phi(Z+a)=\phi(Z)+a$ for $Z\in \Z$ and $a\in \bbr$. Hence, there exists a probability measure $Q$ on $\Xi_1\times \Xi_2$, which depends on the mapping  $\xi^1\mapsto Q^{\xi_1}_2$ (and also on $Q_1$), such that
\begin{equation}\label{rec12}
	\bbe_{Q_1}\left[\bbe_{Q^{\xi_1}_2|\xi_1}[Z]\right]=	\bbe_{Q}[Z].
\end{equation}
For measurable sets $A_1\subset \Xi_1$ and $A_2\subset \Xi_2$, and
$Z(\xi_1,\xi_2):=
\ind_{A_1}(\xi_1)\ind_{A_2}(\xi_2)$, we have
\begin{equation}\label{rec13}
	\int_{A_1}\left( \int_{\A_2} Z(\xi_1,\xi_2)\, Q^{\xi_1}_2(d\xi_2)\right)Q_1(d\xi_1)
	=\int_{A_1}Q^{\xi_1}_2 (A_2)\, Q_1(d\xi_1)
\end{equation}
and hence, for
$A:= A_1\times A_2$,
\begin{equation}\label{rec14}
	Q(A)=\int_{A_1} Q^{\xi_1}_2 (A_2)\, Q_1(d\xi_1).
\end{equation}
The set  $\widehat{\cM}$  is obtained by taking  the union of such measures $Q$  over all mappings in  $\cV$ and $Q_1\in \M_1$,
and then taking the topological closure of the convex hull of that family.
Consequently,
\begin{align}\label{eq-hatem1}
	\cR(Z)&=\sup_{Q_1\in \M_1, Q_2^{(\cdot)}\in \cV}
	\int_{\Xi_1}  \int_{\Xi_2} Z(\xi_1,\xi_2)\,Q^{\xi_1}_2(d\xi_2) Q_1(d\xi_1)\\
	\label{eq-hatem2}
	&= \sup_{Q\in \widehat{\cM}}\bbe_Q[Z].
\end{align}

For example suppose that the set $\Xi_1$ is finite,  $|\Xi_1|=n$,  and  the  set  $\M_1$  is given by the convex hull of a finite  family $\{Q_1^1,\dots,Q_1^{m_1}\}$ of probability measures on $(\Xi_1,\B_1)$, and $\M_2$  is given by the convex hull of a finite  family $\{Q_2^1, \dots,Q_2^{m_2}\}$ of probability measures on $(\Xi_2,\B_2)$. Then the set $\cM$ is given by the convex hull of the family
\begin{equation}\label{family-1}
	\{Q_1^i\times Q_2^j\},  \;i=1, \dots,m_1, \;j=1, \dots,m_2.
\end{equation}

The set~$\widehat{\cM}$, on the other hand, is obtained by taking the convex hull of the family
\begin{equation}\label{family-2}
	\{Q_1^i\times Q_2^{j_k}\}, \;i=1, \dots,m_1, \;j_k\in \{1, \dots,m_2\}, \;k=1, \dots,n.
\end{equation}
 The number of elements (probability measures) in the family~\eqref{family-1} could be as large as $m_1\times  m_2$, while in the family~\eqref{family-2} it  could be as large as  $m_1\times  (m_2)^n$ (some of these probability measures could be equal to each other).
 Family~\eqref{family-1} is a subset of family~\eqref{family-2} obtained by taking indexes $j_k\equiv j$ constant (independent of $k$).
It could be noted  that $\cR(Z)=\R(Z)$ for some $Z\in \Z$ if the supremum  in~\eqref{eq-hatem1}  with respect to $Q_2^{\xi_1}$ does not depend on $\xi_1$.
}
\end{example}

In the considered rectangular framework it is possible  to consider cases without assuming existence of a reference probability measure.

\begin{paragraph} {Moment constraints} 
Suppose that for $t=1,\dots,T$, the set $\M_t$ consists of probability distributions $Q_t$ on $(\Xi_t,\B_t)$ such that
\begin{equation}\label{moment-1}
  \bbe_{Q_t}[\Psi_{ti}(\xi_t)]=b_{ti},\;i=1,\dots,m_t,
\end{equation}
for some measurable functions $\Psi_{ti}:\Xi_t\to \bbr$ and $b_{ti}\in \bbr$.
By duality, under mild regularity conditions,  the corresponding variable $Z_{t-1}(\xi_{[t-1]})$, defined in~\eqref{int-3}, is give by the optimal value of the following problem  (e.g., \cite[Section~6.7]{RuszczynskiShapiro2009})
\begin{equation}\label{moment-2}
\begin{array}{cll}
 \min\limits_{\lambda_t\in \bbr\times \bbr^{m_t}} &
  \lambda_{t0}+\sum_{i=1}^{m_t} b_{ti}\lambda_{ti}\\
  {\rm s.t.}&   \lambda_{t0}+\sum_{i=1}^{m_t} \lambda_{ti}\Psi_{ti}(\xi_t)\ge
 Z_t(\xi_{[t-1]},\xi_t),\;\xi_t\in \Xi_t.
 \end{array}
\end{equation}
 If every set $\Xi_t$ is compact (and hence the set $\Xi$ is compact) and  the function $Z_T=Z$ is continuous on the set $\Xi$, then every $Z_{t}$  is continuous on the set $\Xi_1\times\cdots\times \Xi_t$  (this can be shown by induction going backward in time and using, e.g., \cite[Theo\-rem~7.23]{RuszczynskiShapiro2009}). It is natural here to use   the framework of the $C(\Xi)$ setting (discussed in Section~\ref{ex-comeg}). Consequently, there is a set $\widehat{\cM}$ of probability measures on $(\Xi,\B)$ such that the corresponding composite functional $\cR$ is representable in the form~\eqref{eq-dualnest-2}. The set $\widehat{\cM}$ can be different from the corresponding  set $\cM$  (defined in~\eqref{eq-rect}).

\begin{example}\label{ex-mean}
{\rm
	Suppose that $\Xi_2:= [\alpha,\beta]$, for some $\alpha<\beta$, and the set  $\M_2$  consists of probability measures supported on the interval $[\alpha,\beta]$ and  having mean $\mu\in [\alpha,\beta]$. Suppose further that $Z(\xi_1,\xi_2)$ is convex continuous  in $\xi_2\in [\alpha,\beta]$. Then   the respective maximum over $Q_2\in \M_2$ is attained at the probability measure
	$Q^*_2=p\,\delta(\alpha)+(1-p)\delta(\beta)$
	supported on the end points of the interval $[\alpha,\beta]$, with $p:= (\beta-\mu)/(\beta-\alpha)$. For such $Z$ the respective maximum over $Q_2\in \M_2$ does not depend on $\xi_1$, and hence  $\cR (Z)=\R(Z)$. Note that the assumption that
	$Z(\xi_1,\xi_2)$ is convex in $\xi_2$ is essential here, and that  $\cR (Z)$ can be different from~$\R(Z)$ for general~$Z$.
}
\end{example}

\end{paragraph}

\subsection{Consequences for multistage stochastic optimization}
A typical setting in stochastic optimization considers robustifications at every new decision, i.e., based on the realized history of the entire process. In a scenario tree, this corresponds to a robustification at every node.
A convenient and widely used setting in this respect involves the Wasserstein metric and the nested distance. In what follows we elaborate this setting in the two-stage case first and   then proceed to  the more general multistage situation.

\subsubsection{Two-stage regularity}
The Wasserstein distance is a distance for probability measures. Given this distance~$d$ of measures, a typical ambiguity set is (called Wasserstein ambiguity set in \cite{EsfahaniKuhn, Kuhn2015})
\begin{equation}\label{eq:Ball}
	\cM:= \{Q\colon d(P,Q)\le r\},\end{equation}
where $P$ is some reference measure, often an empirical measure $P=\frac1n\sum_{i=1}^n\delta_{X_i}$ for independent observations $X_i$, $i=1,\dots,n$. The radius~$r$ of the Wasserstein ball~\eqref{eq:Ball} is given the interpretation of a confidence interval to collect all empirical measures with a probability of $95\,\%$, say.
\begin{definition}[Wasserstein distance]\label{def:Wasserstein}
	Let $P$ and $Q$ be probability measures on a Polish space $(\Xi, d)$. The Wasserstein distance of order $r\ge1$ is
	\begin{equation}\label{eq:Wasserstein}
		d_r(P, Q)^r= \inf \iint_{\Xi\times\Xi} d(\xi,\tilde \xi)^r\,\pi(d\xi,d\tilde \xi),
	\end{equation}
	where the infimum is among all bivariate measures~$\pi$ with marginals
	\begin{align}
		\pi(A\times \Xi) &=P(A)\text{ and} \label{eq:62}\\
		\pi(\Xi\times B) &=Q(B)    \label{eq:63}
	\end{align}
	\rev{for all Borel sets $A,B\in\mathcal B(\Xi)$.}
\end{definition}
The Wasserstein distance is of importance because of its duality relation, which is the content of the Kantorovich--Rubinstein theorem, cf.\ \cite{Villani2003} or \cite{Ambrosi2005}. Indeed, it holds that
\begin{equation}\label{eq:61}
	\left |\E_Q [Z] -\E_P [Z]\right | \le L_Z\cdot d_1(Q,P),
\end{equation}
where $L_Z$ is the Lipschitz constant of the random variable $Z$, i.e., $|Z(\xi)-Z(\tilde\xi)|\le L_Z\,d(\xi,\tilde\xi)$.
For Lipschitz continuous functions we find ourselves in the $C(\O)$ setting addressed in the introduction. Note that this setting allows addressing atomic and non-atomic probability measures simultaneously.

Based on~\eqref{eq:61} we have the following result in a two-stage setting.
\begin{theorem}
	Let $P$ be a reference measure and the ambiguity set
	\begin{equation}\label{eq:16}
		\cM=\{Q\colon d_1(P,Q)\le \epsilon\}
	\end{equation}
	collect all measures, which do note deviate by more than $\epsilon\ge0$ in Wasserstein distance from the reference measure~$P$.  Then the bound
	\begin{equation}\label{eq:18}
		\left |\R(Z)-\E_P[Z]\right |\le L_Z\cdot\epsilon
	\end{equation}
	holds for the risk functional $\R(Z)= \sup_{Q\in \cM} \E_Q[Z]$.
\end{theorem}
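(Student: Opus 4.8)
The plan is to bound the distributionally robust functional $\R(Z)=\sup_{Q\in\cM}\E_Q[Z]$ above and below by $\E_P[Z]\pm L_Z\epsilon$, using the Kantorovich--Rubinstein estimate~\eqref{eq:61} together with the fact that $P$ itself lies in $\cM$. First I would observe that for every $Q\in\cM$ we have $d_1(P,Q)\le\epsilon$ by the definition~\eqref{eq:16}, so~\eqref{eq:61} gives $|\E_Q[Z]-\E_P[Z]|\le L_Z\,d_1(Q,P)\le L_Z\epsilon$, which rearranges to $\E_P[Z]-L_Z\epsilon\le\E_Q[Z]\le\E_P[Z]+L_Z\epsilon$. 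Taking the supremum over $Q\in\cM$ on the right-hand inequality yields $\R(Z)\le\E_P[Z]+L_Z\epsilon$.

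For the lower bound I would note that $d_1(P,P)=0\le\epsilon$, hence $P\in\cM$, so $\R(Z)=\sup_{Q\in\cM}\E_Q[Z]\ge\E_P[Z]$. Combining with the uniform lower estimate $\E_Q[Z]\ge\E_P[Z]-L_Z\epsilon$ is actually not even needed for the lower side: since $P\in\cM$ we directly get $\R(Z)\ge\E_P[Z]\ge\E_P[Z]-L_Z\epsilon$. Putting the two bounds together gives $|\R(Z)-\E_P[Z]|\le L_Z\epsilon$, which is~\eqref{eq:18}.

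The only genuine subtlety is a measure-theoretic one rather than a conceptual obstacle: one must make sure that $Z$ is Lipschitz continuous with a finite constant $L_Z$ so that~\eqref{eq:61} applies (this is implicit in the statement, since $L_Z$ appears in the conclusion), and that $\R(Z)$ is finite, which follows from the $C(\O)$-setting discussion in Section~\ref{ex-comeg} — $\O$ compact metric, $Z$ continuous hence bounded, and~\eqref{lipsch} already guarantees finiteness and Lipschitz continuity of $\R$. I would also remark that if $\epsilon=0$ then $\cM=\{P\}$ and the inequality is the trivial identity $\R(Z)=\E_P[Z]$, so the statement is consistent with the degenerate case. The main ``work'' is therefore just invoking the Kantorovich--Rubinstein bound uniformly over the Wasserstein ball and using $P\in\cM$; no estimation beyond that is required.

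Concretely, the proof I would write is:
\begin{proof}
	Since $d_1(P,P)=0\le\epsilon$, we have $P\in\cM$, and therefore $\R(Z)=\sup_{Q\in\cM}\E_Q[Z]\ge\E_P[Z]$. Conversely, for every $Q\in\cM$ the bound~\eqref{eq:61} together with $d_1(Q,P)\le\epsilon$ yields $\E_Q[Z]\le\E_P[Z]+L_Z\,d_1(Q,P)\le\E_P[Z]+L_Z\epsilon$, and passing to the supremum over $Q\in\cM$ gives $\R(Z)\le\E_P[Z]+L_Z\epsilon$. Combining the two inequalities we obtain $\E_P[Z]\le\R(Z)\le\E_P[Z]+L_Z\epsilon$, which implies~\eqref{eq:18}.
\end{proof}
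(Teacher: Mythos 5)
Your proof is correct and follows exactly the route the paper intends: the theorem is stated there without an explicit proof, introduced only by the remark that it is ``based on~\eqref{eq:61}'', and your argument---applying the Kantorovich--Rubinstein bound uniformly over the Wasserstein ball for the upper estimate and using $P\in\cM$ for the lower one---is precisely that intended argument, spelled out. No gaps.
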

\begin{rev}
	The ambiguity set~\eqref{eq:16} is popular in applications (cf.\ \cite{EsfahaniKuhn}). For this reason we emphasize that the $C(\Omega)$ setting is important here, as the measures in~\eqref{eq:16} are non-dominated in the sense of Section~\ref{rem-noref}.
	Further, the bound~\eqref{eq:18} allows comparing the risk functional~$\R$ with the much simpler risk neutral expectation.
\end{rev}

\begin{rev}
\subsubsection{Multistage regularity and the nested distance}
	The nested distance builds on the Wasserstein distance, but respects the evolution of the process in addition.
\end{rev}
For the corresponding robustification in the multistage situation let $P\in\cP$ be the law of a reference process with disintegration
\[\E_P[Z]=\int_{A_1}\int_{A_2}\dots\int_{A_T} Z(\xi_1,\dots,\xi_T)\, P_T^{\xi_{[T-1]}}(d\xi_T)\dots P_2^{\xi_1}(d\xi_2)P_1(d\xi_1).\]
\begin{rev}
\begin{definition}[Nested distance, cf.\ Definition~\ref{def:Wasserstein} (Wasserstein distance)]
	For $P$ and $Q$ probability laws on a Polish space $(\Xi,d)$ with $\Xi:=\Xi_1\times\dots\times\Xi_T$, the \emph{nested distance} of order $r\ge1$ is
	\begin{equation*}
		\nd_r(P, Q)^r:= \inf \iint_{\Xi\times\Xi} d(\xi,\tilde \xi)^r\,\pi(d\xi,d\tilde \xi),
	\end{equation*}
	where the infimum is among all bivariate measures~$\pi$ on $\Xi\times\Xi$ with conditional marginals
	\begin{align*}
		\pi^{\xi_{[t]},\eta_{[t]}}(A\times \Xi) &=P^{\xi_{[t]}}(A)\text{ a.s.\ and} 
		\\
		\pi^{\xi_{[t]},\eta_{[t]}}(\Xi\times B) &=Q^{\eta_{[t]}}(B)\text{ a.s.}    
	\end{align*}
	for all Borel sets $A,B\in\mathcal B(\Xi)$ and times $t=1,\dots,T$.
\end{definition}
\end{rev}

Given some history $\xi_{[t-1]}$ consider now the specific ambiguity set $\M_t^{\xi_{[t-1]}}$ with
\begin{align}	\label{eq:17}
	Q_t^{\xi_{[t-1]}} & \in\M_t^{\xi_{[t-1]}}\\
	\text{ iff } &	d_1\big( Q_t^{\xi_{[t-1]}}, P_t^{\tilde\xi_{[t-1]}}\big)\le \epsilon_t+\kappa_t \, d(\xi_{[t-1]}, \tilde\xi_{[t-1]})\text{ for all } \tilde\xi_{[t-1]}. \nonumber
\end{align}
The ambiguity set~\eqref{eq:17} is constructed in the same way as~\eqref{eq:16} and contains all transitions which are close, in the sense specified, to the corresponding transitions of~$P$.

We consider the risk functional
\begin{equation}\label{nest-2b}
	\cR(Z)= \sup_{Q_1\in \M_1} \bbe_{Q_1}\left[ Z_1(\xi_1)\right],
\end{equation}
with conditional measures
\begin{equation}\label{int-recnest-2}
	Z_{t-1}(\xi_{[t-1]})= \sup_{Q_t\in\M_{t}^{\xi_{[t-1]}}}
	\left\{\bbe_{Q_t|\xi_{[t-1]}}[Z_t]= \int_{\Xi_t}
	Z_t(\xi_{[t-1]},\xi_t)\,Q_t(d\xi_t)\right\}
\end{equation}
on the nodes $\xi_{[t-1]}$. The risk measure~\eqref{nest-2b} is the risk measure defined in~\eqref{int-recnest-1}--\eqref{nest-2a} above with transitions specified in~\eqref{eq:17}.

Based on \cite[Proposition~4.26]{PflugPichlerBuch} we have the following result which allows comparing the risk measure with the simple expectation with respect to the reference probability measure.
\begin{proposition}\label{prop:53}
	It holds that \[|\rev{\mathfrak R}(Z)-\E_P [Z]|\rev{\le \nd_1(P,Q) \le } L_Z \sum_{t=1}^T\epsilon_t w_t\prod_{s=t+1}^T(1+w_s\kappa_s)\]
	\rev{for every $Q$ with conditional marginals in $\mathcal M_t^{\xi_{[t-1]}}$, $t=1,\dots,T$} and provided that the objective~$Z$ is Lipschitz with
	\[|Z(\xi_1,\dots,\xi_T)-Z(\tilde\xi_1,\dots,\tilde\xi_T)| \le \sum_{t=1}^T w_t\,d_t(\xi_t,\tilde \xi_t).\]
\end{proposition}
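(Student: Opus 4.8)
The plan is to run the robust backward recursion~\eqref{int-recnest-2} in parallel with the analogous recursion built from the \emph{reference} transitions $P_t^{\xi_{[t-1]}}$, and to control the gap between the two recursions one stage at a time by means of the Kantorovich--Rubinstein inequality~\eqref{eq:61}. (Throughout, $\cR$ is the functional of~\eqref{nest-2b}; this is the object written $\R$ in the statement.)

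Concretely, I would set $Z_T:=Z$ and $\bar Z_T:=Z$, and for $t=T,\dots,2$ define backward $Z_{t-1}(\xi_{[t-1]}):=\sup_{Q_t\in\cM_t^{\xi_{[t-1]}}}\bbe_{Q_t|\xi_{[t-1]}}[Z_t]$ together with $\bar Z_{t-1}(\xi_{[t-1]}):=\int_{\Xi_t}\bar Z_t(\xi_{[t-1]},\xi_t)\,P_t^{\xi_{[t-1]}}(d\xi_t)$, so that $\cR(Z)=\sup_{Q_1\in\cM_1}\bbe_{Q_1}[Z_1]$ and $\bbe_P[Z]=\bbe_{P_1}[\bar Z_1]$. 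A preliminary step is to check that all these objects are well defined and measurable in the history: in the $C(\Xi)$ regularity sketched above the Lipschitz hypothesis on $Z$ propagates backward, so that each $\bar Z_t$ (and, with a little more care, each $Z_t$) is Lipschitz in $\xi_{[t]}$, and the measurable near-optimal selections $\xi_{[t-1]}\mapsto Q_t$ needed to make sense of the nested suprema are exactly what \citet[Proposition~4.26]{PflugPichlerBuch} supplies.

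The core is the one-step estimate $\|Z_{t-1}-\bar Z_{t-1}\|_\infty\le\|Z_t-\bar Z_t\|_\infty+\epsilon_t\,\ell_t$, where $\ell_t$ denotes the Lipschitz modulus of $\bar Z_t(\xi_{[t-1]},\cdot)$ in its last argument. For the upper side, for every admissible $Q_t$ one writes $\bbe_{Q_t}[Z_t]\le\bbe_{Q_t}[\bar Z_t]+\|Z_t-\bar Z_t\|_\infty$ and then bounds $\bbe_{Q_t}[\bar Z_t]-\bbe_{P_t^{\xi_{[t-1]}}}[\bar Z_t]\le\ell_t\,d_1(Q_t,P_t^{\xi_{[t-1]}})\le\ell_t\,\epsilon_t$ by~\eqref{eq:61}, using that~\eqref{eq:17} with $\tilde\xi_{[t-1]}=\xi_{[t-1]}$ forces $d_1(Q_t,P_t^{\xi_{[t-1]}})\le\epsilon_t$; taking the supremum over $Q_t\in\cM_t^{\xi_{[t-1]}}$ gives $Z_{t-1}\le\bar Z_{t-1}+\|Z_t-\bar Z_t\|_\infty+\epsilon_t\ell_t$. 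For the lower side one fixes any $Q_t\in\cM_t^{\xi_{[t-1]}}$ and runs the same two inequalities in reverse. Iterating from $t=T$, where $\|Z_T-\bar Z_T\|_\infty=0$, down to the root, and applying the same splitting once more at the first stage with $\cM_1=\{Q_1\colon d_1(P_1,Q_1)\le\epsilon_1\}$, yields $|\cR(Z)-\bbe_P[Z]|\le\sum_{t=1}^T\epsilon_t\,\ell_t$.

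It then remains to bound the moduli $\ell_t$, and this is where the product $\prod_{s=t+1}^T(1+w_s\kappa_s)$ and the factor $L_Z$ come in. Estimating the variation of $\bar Z_{t-1}(\xi_{[t-1]})$ in the last coordinate splits into the variation of the integrand $\bar Z_t$, controlled by the Lipschitz weights $w_s$ on $Z$, and the variation of the kernel $P_t^{\xi_{[t-1]}}$, controlled via~\eqref{eq:61} by the $\kappa_t$-Lipschitz dependence of the reference transitions that is implicit in~\eqref{eq:17}; unwinding this recursion backward in $t$ compounds these contributions into the stated product, while $L_Z$ is the Lipschitz constant of $Z$ on $(\Xi,d)$ appearing in~\eqref{eq:61}. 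The hard part is precisely this last bookkeeping, carried out jointly with the measurability and selection issues of the continuous-state case; it is the content of \citet[Proposition~4.26]{PflugPichlerBuch}, from which the claimed inequality follows.
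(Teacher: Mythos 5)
The paper offers no proof of this proposition beyond the appeal to \citet[Proposition~4.26]{PflugPichlerBuch}, and your argument --- a backward recursion coupling the robust and reference transition kernels stage by stage through the Kantorovich--Rubinstein inequality \eqref{eq:61}, with the final constant bookkeeping delegated to that same reference --- is exactly the argument that citation encapsulates, so your sketch is consistent with, and more explicit than, what the paper provides. The one caveat is that your additive one-step recursion for the moduli $\ell_t$ (essentially $\ell_{t-1}\le w_{t-1}+\kappa_t\ell_t$) does not by itself visibly reproduce the multiplicative factor $w_t\prod_{s=t+1}^T(1+w_s\kappa_s)$ in the stated bound, which is precisely the bookkeeping you, like the paper, leave to the cited proposition.
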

A notable situation arises for the stagewise independent measure $P=P_1\times\dots\times P_T$ in the rectangular case.

\begin{corollary}

	Suppose that
	\begin{equation}\label{eq:19}
		Q_t\in\M_t \text{ iff } d_1(P_t,Q_t)\le \epsilon_t.
	\end{equation}
	Then
 \[
 \left|\rev{\mathfrak R}(Z)-\E_P[Z]\right| \le L_Z\sum_{t=1}^T \epsilon_t\,w_t.
 \]
\end{corollary}

Similarly to~\eqref{eq:Ball}, the interpretation of~\eqref{eq:17} and~\eqref{eq:19} in the multistage setting is based on ambiguity sets of conditional transitions. The results in Proposition~\ref{prop:53} and the corollary compare the ambiguous approach with the  risk neutral approach and measure the impact of considering risk.

\section{Distributionally robust multistage optimization}
\label{sec:MSO}

Following the general paradigm of this paper we consider in this section  different approaches to formulation of    distributionally robust   multistage stochastic optimization problems. Again, we observe conceptual differences which we highlight here.

Consider the $L_p$ setting and  a composite functional $\cR$ of the form~\eqref{nestdec-2}. The involved $\R_{|\F_t}$ can be the  respective conditional distributionally robust functionals or the nested risk measures  associated with a law invariant coherent risk measure. In both cases there is a set $\widehat{\cM}$ of probability measures such that $\cR$ can be represented in the form~\eqref{eq-dualnest-2} provided that $\cR\colon\Z\to \bbr$ is finite valued.

Consider the  multistage stochastic program
\begin{equation}\label{stoc-1}
	\min\limits_{\pi\in \Pi}\cR (Z^\pi ),
\end{equation}
where $\Pi$ denotes the set of policies $\pi=\left (x_1,x_2(\xi_{[2]}),\dots,x_T(\xi_{[T]})\right)$ satisfying the feasibility constraints
\begin{equation}\label{stoc-2}
	x_1\in \X_1,\;x_t  \in \X_t(x_{t-1},\xi_t), \;t=2,\dots,T-1,
\end{equation}
and such that the total cost
\begin{equation}\label{stoc-3}
	\begin{array}{ll}
		Z^\pi:=  \sum_{t=1}^T f_t(x_t^\pi,\xi_t )
	\end{array}
\end{equation}
belongs to the space $\Z$ on which the functional $\cR$ is defined.
Here $x_t\in
\bbr^{n_t}$, $t=1,{\dots},T$,   $f_t\colon\bbr^{n_{t}}\times \bbr^{d_t}\to
\bbr$ are continuous functions and
$\X_t: \bbr^{n_{t-1}}\times \bbr^{d_t}  \rightrightarrows
\bbr^{n_{t}}$, $t=2,{\dots},T$,  are measurable multifunctions. The first stage
data, i.e.,  the vector $\xi_1$,  the function
$f_1\colon\bbr^{n_1}\to\bbr$, and the
set  $\X_1\subset \bbr^{n_1}$ are deterministic. The feasibility constraints in~\eqref{stoc-2} should be satisfied   almost surely   with respect to the reference measure~$P$.

\begin{rev}
Consider the nested \emph{risk} functional $\cR$ discussed in section~\ref{sec-nestrect}.
We can view the corresponding   risk averse  stochastic program~\eqref{stoc-1} as a stochastic game. In the framework of Markov decision processes (MDP), stochastic games were introduced in Shapley~\cite{Shapley} and were studied  extensively (see, e.g., the survey \cite{Jask2016}). For a policy $\pi\in \Pi$, we can think about the opponent who chooses at
every stage $t$  a distribution $Q^\pi_t\in \M_t^{\xi_{[t-1]}}$
for given  realization $\xi_{[t-1]}$ of the data process and  decision  $x_t=x_t^\pi$.
A choice of such probability distributions defines the corresponding policy of the opponent. For $\pi\in \Pi$,  denote by $\Gamma^\pi$ the set of such  policies of the opponent. Then the risk averse  program~\eqref{stoc-1} can be written in the following minimax form
\begin{equation}\label{minimax}
 \min_{\pi\in \Pi}\sup_{\gamma\in \Gamma^\pi}\bbe^{\pi,\gamma}[Z^\pi],
\end{equation}
where $Z^\pi$ is the total cost~\eqref{stoc-3}, $\gamma\in \Gamma^\pi$  is policy of the opponent with the respective distributions $Q^\pi_t\in \M_t^{\xi_{[t-1]}}$,  and
\begin{equation}\label{expv}
 \bbe^{\pi,\gamma}\left[\,\cdot\,
\right]:= \bbe_{Q^\pi_1}\left [ \bbe_{Q^\pi_2|\xi_{[1]}} \big [  \cdots \bbe_{Q^\pi_T|\xi_{[T-1]}}[\,\cdot \,]\big] \right],
 \end{equation}
 is the corresponding expectation  written in the nested (composite) form. Therefore  for $\pi\in \Pi$ we have that 
 \begin{equation}\label{game}
 \sup_{\gamma\in \Gamma^\pi}\bbe^{\pi,\gamma}[Z^\pi]=\cR(Z^\pi).
 \end{equation}
\end{rev}

Consider  the \emph{rectangular} case, i.e., assume that the set $\cM$ is given in the form~\eqref{eq-rect}. Then the nested and the conditional distributionally robust approaches are equivalent, and
the respective dynamic programming equations for value (cost-to-go) functions can be written as
\begin{align}\label{mul4-r}
	V_{t}\left (x_{t-1},\xi_{t}\right )&=\inf\limits_{x_t\in \X_t(x_{t-1},\xi_t)}
		\left\{ f_t(x_{t},\xi_t)+
		\V_{t+1}\left (x_t\right ) \right\},\\
	\label{mul5-r}
	\V_{t+1}\left (x_t\right )& =\sup\limits_{Q_{t+1}\in \M_{t+1}}
	\bbe_{Q_{t+1}}\left[V_{t+1}\left (x_t,\xi_{t+1}\right )\right],
\end{align}
$t=1, \dots,T$, with $\V_{T+1}(\cdot)\equiv 0$ and  $\X_1(x_{0},\xi_1)\equiv \X_1$.
A sufficient condition for
 $\bar{x}_t= \pi_t(\bar{x}_{t-1},\xi_t)$ to be an optimal policy is
\begin{equation}\label{mul5-cond}
	\bar{x}_t\in \argmin\limits_{x_t\in \X_t(\bar{x}_{t-1},\xi_t)}
	\left\{ f_t(x_{t},\xi_t)+
\V_{t+1}\left (x_t\right )\right\}.
\end{equation}
This condition is also necessary if the corresponding  functional is \emph{strictly} monotone. \begin{rev}Here policy of  the opponent is defined by a  choice of distribution $Q^\pi_t\in \M_t$,  at every stage of the process, which  depends  on decision $x_t=x_t^\pi$ but not on $\xi_{[t-1]}$. Again the corresponding distributionally robust/risk averse problem~\eqref{stoc-1} can be written in the minimax form~\eqref{minimax}.

In order to derive a dual of the risk averse multistage problem~\eqref{stoc-1} the `min' and `max' operators in the representation~\eqref{minimax} cannot be interchanged, even in the rectangular case,  since the set $\Gamma^\pi$ depends on $\pi\in \Pi$.
\end{rev}
On the  other hand by~\eqref{eq-dualnest-2} we can write problem~\eqref{stoc-1} in the form
\begin{equation}\label{mul-5}
\min_{\pi\in \Pi}\sup_{Q\in \widehat{\cM}} \bbe_{Q} [Z^\pi ].
\end{equation}
The dual of the minimax  problem~\eqref{mul-5} is obtained by interchanging the `min' and `max' operators, that is
\begin{equation}\label{mul-5dual}
 \max_{Q\in \widehat{\cM}} \inf_{\pi\in \Pi} \bbe_{Q} [Z^\pi ].
\end{equation}
\rev{The optimal value of  dual problem~\eqref{mul-5dual} is always less than or equal to the optimal value of the primal problem.
}
If the problem~\eqref{stoc-1} is convex, e.g., the objective functions $f_t(\cdot,\xi_t)$ are convex and the constraints are linear, then under certain  regularity conditions the optimal values of problems~\eqref{mul-5} and~\eqref{mul-5dual} are equal to each other.
\begin{rev}
\begin{remark}[Strong duality]
	Consider the $L_p$ setting with $\Z=L_p(\O,\F,P)$. Suppose that the problem is convex and  the cost functions $f_t(x_t,\xi_t)$  are continuous in $\xi_t$, and the  composite functional $\cR$ is finite valued and can be represented in the form~\eqref{eq-dualnest-2} (see Proposition~\ref {pr-dualrepnest}). The corresponding set $\widehat{\cA}$ is bounded, weakly$^*$  closed and hence is compact in the weak$^*$ topology of the dual space $\Z^*$.
	Consequently, it is possible  to apply Sion's minimax  theorem \cite{sion} to establish the required  no duality gap  property.
	To this end note that the  bilinear form $\bbe_\zeta[Z]=\lan Z,\zeta\ran$ is weakly$^*$ continuous in $\zeta\in \Z^*$, and  the required lower semi-continuity with respect to $Z\in \Z$  follows by Fatou's lemma.
\end{remark}
\end{rev}

As it was discussed before, even in the rectangular case the set $\cM$, defined in~\eqref{eq-rect}, in general is  different (smaller) than the corresponding set $\widehat{\cM}$.  By replacing the set $\widehat{\cM}$ in~\eqref{mul-5} with the set $\cM$ we obtain the problem
\begin{equation}\label{mul-5mm}
	 \min_{\pi\in \Pi}\left\{\R(Z^\pi)=\sup_{Q\in  \cM} \bbe_{Q} [Z^\pi ]\right\},
\end{equation}
and thus, comparing with~\eqref{mul-5}, that
\begin{equation}
	 \min_{\pi\in \Pi} \R(Z^\pi)
 \le  \min_{\pi\in \Pi} \cR(Z^\pi).
\end{equation}

\begin{rev}
	The Formulation~\eqref{mul-5mm} of multistage stochastic optimization problems was investigated by various authors. Note, however, that such a formulation is not adjusted to the dynamics of the decision process even in the rectangular setting.
\end{rev}
Suppose that the maximum in the right-hand side of~\eqref{mul5-r} is attained at a probability measure of the set $\M_{t+1}$. This probability measure depends on $x_t$. Since for a considered policy $\pi$, the decision  $x^\pi_t=x_t(\xi_{[t]})$ is a function of the history of the data process, the probability measure $Q_{t+1}^{\xi_{[t]}}$ maximizing the right-hand side of~\eqref{mul5-r}   also depends  on the history of the data process (compare with the  discussion of Exam\-ple~\ref{ex-twost}).
On the other hand for the problem~\eqref{mul-5mm}, of minimizing $\R(Z^\pi)$ over $\pi\in \Pi$, the corresponding marginal  probability measures $Q_t$ should not depend on realizations of the data process. As a consequence,  problem~\eqref{mul-5mm} cannot be decomposed into  dynamic programming equations of the form~\eqref{mul4-r}--\eqref{mul5-r}.

\section{Summary}
\label{sec:Summary}
The literature suggests various approaches towards constructing multistage  counterparts of distributionally robust  functionals.
\begin{rev}One approach is to use the “static” functional $\R$ which is not adjusted to the dynamics of the decision process.   The other approach, now routinely used in the risk averse stochastic optimization, is to employ  nested formulations of the corresponding risk measures. In such an approach the risk is controlled at every stage of the decision process while the total value of the corresponding objective function is irrelevant.
\end{rev}
Although natural in some sense, this paper exposes that the respective nested \emph{distributionally robust} approach could be  very different. It is demonstrated that in the case of the rectangular setting, which is important from an application perspective, both approaches are equivalent.

We investigate the foundations by addressing the individual features, the major differences and particularly the relevance of conditioned risk functionals for multistage stochastic optimization.

\bibliographystyle{siamplain}
\bibliography{reference}
\end{document}